\newcommand{\eqdef}{\stackrel{\scriptscriptstyle\rm def}{=}}
\newtheorem{theorem}{Theorem}[section]
\newtheorem{proposition}[theorem]{Proposition}
\newtheorem{corollary}[theorem]{Corollary}
\newtheorem{definition}[theorem]{Definition}
\newtheorem{lemma}[theorem]{Lemma}
\newtheorem{example}[theorem]{Example}
\newcommand{\beha}{\begin{enumerate}}
\newcommand{\behe}{\end{enumerate}}
\renewcommand{\epsilon}{\varepsilon}
\renewcommand{\refeq}[1]{(\ref{#1})}
\newcommand{\Per}{{\rm Per}}
\newcommand{\EPer}{{\rm EPer}}
\newcommand{\Or}{\mathcal{O}}
\DeclareMathOperator{\GS}{GS}
\newcommand{\cM}{\EuScript{M}}
\newcommand{\cH}{\EuScript{H}}
\newcommand{\bR}{{\mathbb R}}
\newcommand{\bZ}{{\mathbb Z}}
\newcommand{\bN}{{\mathbb N}}
\newcommand{\bQ}{{\mathbb Q}}
\newcommand{\cA}{{\mathcal A}}
\newcommand{\cC}{{\mathcal C}}
\newcommand{\cU}{{\mathcal U}}
\newcommand{\cV}{{\mathcal V}}
\newcommand{\cS}{{\mathcal S}}
\def\1{1\!\!1}
\def\and{\text{ and }}
        \def\conv{\text{{\rm conv}}}
\def\h{{\text h}}
           \def\htop{{\text h_{{\rm top}}}}
 \def\Ptop{{\rm P_{\rm top}}}
          \def\cO{\mathcal O}
\def\ES{{\rm ES}}
                        \def\^{\widetilde}
\def\Per{{\rm Per}}
\def\inn{{\rm int}}
\def\var{{\rm var}}
\def\Per{{\rm Per}}
\def\EPer{{\rm EPer}}
\def\1{1\!\!1}
\def\inn{{\rm int\ }}
\newtheorem*{thmA}{Theorem A}
\newtheorem*{thmB}{Theorem B}
\newtheorem*{thmC}{Theorem C}
\DeclareMathSymbol{\varnothing}{\mathord}{AMSb}{"3F}
\title{Computability at zero temperature}
\author{Michael Burr}
\address{Department of Mathematical Sciences, Clemson University, Clemson SC, 29634, USA}\email{burr2@clemson.edu}
\thanks{Burr was partially supported by the National Science Foundation Grant CCF-1527193. }
\thanks{Wolf partially supported by a grant from the PSC-CUNY (TRADB-49-253 to Christian Wolf)}
\author{Christian Wolf}\address{Department of Mathematics, The City College of New York, New York, NY, 10031, USA}\email{cwolf@ccny.cuny.edu}
\begin{document}

\begin{abstract}
In this paper, we investigate  the computability of thermodynamic invariants at zero temperature for one-dimensional subshifts of finite type. In particular, we prove that the residual entropy (i.e., the joint ground state entropy) is an upper semi-computable  function on the space of continuous potentials, but it is not computable.  Next, we consider locally constant potentials for which the zero-temperature measure is known to exist. We  characterize the computability of the zero-temperature measure and its entropy for potentials that are constant on cylinders of a given  length $k$.  In particular, we show the  existence of an open and dense set of locally constant potentials for which the zero-temperature measure can be computationally identified as an elementary periodic point measure. Finally, we show that our methods do not generalize to treat the case when $k$ is not given. 
\end{abstract}
\keywords{Zero-temperature measures, residual entropy, ground states, entropy, thermodynamic formalism, computability}
\subjclass[2010]{Primary 37D35, 37E45, 03D15 Secondary 37B10,  37L40, 03D80}
\maketitle

\section{Introduction}

\subsection{Motivation}
It is a natural and important question to understand which mathematical invariants can (in principle) be derived by computer experiments.  In particular, since computer-based approximations are often used to gain insight into theoretical questions, estimates on the quality and accuracy of computational results may be needed to have confidence in any conjectures drawn from such experiments.  The answers to these questions (and the corresponding estimates) are naturally linked to questions about mathematical proofs and models.  In fact, these answers lie at the boundary of mathematics, computer science, and their applications.

In this paper, we provide some answers concerning the computability of basic thermodynamic invariants at zero temperature. 
In particular, we study the computability of the residual entropy (which coincides with the entropy of the ground states of the system) on the space of continuous potentials for subshifts of finite type (SFTs). We show that the residual entropy is an upper semi-computable function of the potentials, but it is not computable.  One complication that arrises is that continuous potentials may have phase transitions, which do not occur in the  H\"older continuous case. Since, in general, phase transitions cannot be detected algorithmically, see, e.g., \cite{Sp,Yap:Zero}, we are required to develop a new approach which is based on techniques from convex analysis and the thermodynamic formalism.

We also consider the computability of the zero-temperature measure for locally constant potentials. The existence of this measure was originally established by Br\'emont \cite{Br} by using methods from analytic geometry (for existence proofs using methods from dynamical systems, see \cite{CGU,L}).  For potentials that are constant on cylinders of a given length $k$, we provide explicit characterizations of the sets of potentials for which the zero-temperature measure or its entropy are computable.  We explicitly describe an open and dense subset $\cO_k$ of computable, locally constant potentials for which the zero-temperature measure is a computable periodic point measure.
As a counterpart to these results, we show that once we consider the space of all locally constant potentials (i.e., without fixing the cylinder length $k$), then the set $\cO=\bigcup_k \cO_k$ has empty interior. In particular, this shows that our results do not directly generalize to the case when $k$ is not given.

In the literature, there are several recent papers that study invariant sets, topological entropy, and other invariants from the computable analysis point of view.  These papers include results about the computability of certain specific measures (e.g., maximal entropy and physical measures), see \cite{BBRY,GHR} and the references therein.  Furthermore, there are papers proving results on the numerical computation of invariant sets, entropy, and dimension, see, e.g., \cite{Co,JP1,JP2} and the references therein.  There are also studies concerning the computation of the topological entropy or pressure for one and multi-dimensional shift maps, see, e.g., \cite{ HS,HM,Pa, PS1,Sc,Sp}.  In our recent paper with Schmoll \cite{BSW}, we derive results about the computability of generalized rotation sets and localized entropies.  In particular, our results hold for SFTs.  We note that the results in \cite{BSW} only consider the case of positive temperature, while the 
more delicate case of zero temperature is considered in this paper.  To the best of our knowledge, this paper is the first attempt to study the  computability of thermodynamic invariants at zero-temperature.

\subsection{Statement of results.}\label{sec:results}  Let $f:X\to X$ be a  subshift of finite type (SFT) over an alphabet with $d$ elements, and  let $\cM$ be the set of $f$-invariant Borel probability measures on $X$ endowed with the weak$^\ast$ topology.  This topology makes  $\cM$ a  compact, convex, and metrizable topological space.  
In this paper, we use as the standing assumption that $f$ is transitive and has positive topological entropy.
We consider
 the Banach space $(C(X, \bR),\|\cdot\|_\infty)$,  where $\|\cdot\|_\infty$ denotes the supremum norm.
For  $\phi\in C(X,\bR)$ and $\mu\in\cM$, we write
$\mu(\phi)=\int \phi\, d\mu$ and define
$$
I(\phi)=\left\{\mu(\phi): \mu \in \cM\right\}.
$$
It follows, from the compactness and convexity of $\cM$, that $I(\phi)$ is a compact interval $[a_\phi,b_\phi]$. We define $\cM_{\max}(\phi)=\{\mu\in \cM: \mu(\phi)=b_\phi\}.$
If $\mu\in \cM_{\max}(\phi)$, then we say $\mu$ is a {\em maximizing measure} for $\phi$. Moreover, we say $\phi\in C(X,\bR)$ is {\em uniquely maximizing} if $\cM_{\max}(\phi)$ is a singleton. 
We note that the study of maximizing measures is one of the central objectives in the area of ergodic optimization. We refer the reader to the survey article \cite{Je2} for a state-of-the-art presentation. 
We call 
$$
h_{\infty,\phi} =\sup\{h_\mu(f): \mu\in \cM_{\max}(\phi)\}
$$
 the {\em residual entropy} of the potential $\phi$. The residual entropy coincides with the entropy of the ground states of the potential $\phi$ (see  Section   \ref{sec: ground states} for details). In particular, if the zero-temperature measure $\mu_{\infty,\phi}$ of $\phi$ exists (see below and Section 2.2 for the definition of zero-temperature measures), then $h_{\infty,\phi}$ coincides with the entropy of $\mu_{\infty,\phi}$.
 
There are several recent theoretical results about the residual entropy and uniquely maximizing periodic point measures for an open and dense set of potentials in the H\"older and Lipschitz topologies \cite{C,CLT,M,QS}.  We observe, however, that these topologies are not compatible with the supremum topology since open balls in the supremum topology are not bounded in the H\"older and Lipschitz topologies.  Therefore, it does not appear possible to study these genericity results from the computable analysis point of view, see Section \ref{sec:compute:basic} for details.  Consequently, the work in this paper uses the supremum norm.

Our first goal is to characterize the computability of the function $\phi\mapsto h_{\infty,\phi}$.  In this paper, we use two notions of computability for functions: Computable functions and upper semi-computable functions (also called right recursively enumerable or right computable functions).  We say that a function $h:C(X,\bR)\to \bR$ is computable if, for any input function $\phi$, the real number $h(\phi)$ can be calculated to any prescribed accuracy.  Upper semi-computability is a weaker notion of computability, where, instead, there is an algorithm to compute a sequence $q_n$ converging to $h(\phi)$ from above.  In particular, for upper semi-computability, the bounds on the convergence rate for $q_n\rightarrow h(\phi)$ are not included. We refer the reader to Section 2.5 and \cite{BY,GHR} for details.

We note that solely using computability, equality is not decidable.  More precisely, we observe that when $h$ is computable, we can only calculate $h(\phi)$ up to some error.  Therefore, we may conclude that $h(\phi)$ is in a small interval, but we cannot conclude which point in the interval equals $h(\phi)$.  For more details, see Section \ref{sec:compute:basic} and \cite{Sp,Yap:Zero}.

The first main theorem we prove in this paper shows that the residual entropy is semi-computable, but not  computable.

\begin{thmA}
The function $\phi\mapsto h_{\infty,\phi}$ is upper semi-computable, but not computable on $C(X,\bR)$.  Moreover, the map $\phi\mapsto\h_{\infty,\phi}$ is continuous at $\phi_0$ if and only if $h_{\infty,\phi_0}=0$.
\end{thmA}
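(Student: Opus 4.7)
The plan is to establish the three claims in sequence: upper semi-computability via an explicit zero-temperature approximation, the continuity dichotomy by combining upper semi-continuity of $h_{\infty,\cdot}$ with a density result for potentials having periodic maximising measures, and finally non-computability as a corollary of discontinuity at the computable point $\phi\equiv 0$.

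For upper semi-computability I would use the classical zero-temperature limit
\begin{equation*}
h_{\infty,\phi}\;=\;\lim_{t\to\infty}\bigl(P(t\phi)-t\,b_\phi\bigr),
\end{equation*}
together with monotone convergence from above: the function $t\mapsto P(t\phi)$ is convex as a supremum of affine functions, and at every $t$ the right-derivative of $P(t\phi)-tb_\phi$ equals $\mu_t(\phi)-b_\phi\leq 0$ for any equilibrium state $\mu_t$ of $t\phi$, so this function is non-increasing in $t$. Both $\phi\mapsto P(\phi)$ and $\phi\mapsto b_\phi$ are $1$-Lipschitz in $\|\cdot\|_\infty$ by the variational principle, and each is algorithmically computable on the dense subspace of computable locally constant potentials (pressure as $\log\lambda$ of a non-negative Perron--Frobenius transfer matrix with computable entries, $b_\phi$ as the maximum cycle mean in the associated labelled graph); Lipschitz extension then makes them computable throughout $C(X,\bR)$. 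Rounding $P(n\phi)-n\,b_\phi$ upwards to rational precision $1/n$ and taking a cumulative minimum for monotonicity delivers the required non-increasing computable rational sequence converging to $h_{\infty,\phi}$.

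For the continuity dichotomy I would first verify that $\phi\mapsto h_{\infty,\phi}$ is upper semi-continuous on $(C(X,\bR),\|\cdot\|_\infty)$. Choosing $\mu_n\in\cM_{\max}(\phi_n)$ with $h_{\mu_n}=h_{\infty,\phi_n}$ (the maximum is attained since $\cM_{\max}(\phi_n)$ is compact and entropy is USC on $\cM$ by expansiveness of the SFT), any weak-$*$ accumulation point $\mu_*$ of $(\mu_n)$ satisfies $\mu_*(\phi_0)=b_{\phi_0}$ by Lipschitz continuity of $b$ combined with $\mu_n(\phi_n)=b_{\phi_n}$, so $\mu_*\in\cM_{\max}(\phi_0)$, and USC of entropy gives $\limsup h_{\infty,\phi_n}\leq h_{\mu_*}\leq h_{\infty,\phi_0}$. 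The case $h_{\infty,\phi_0}=0$ is then immediate: USC together with non-negativity of entropy forces $h_{\infty,\phi_n}\to 0$, giving continuity. For the converse, if $h_{\infty,\phi_0}>0$ I would produce $\phi_n\to\phi_0$ with $h_{\infty,\phi_n}=0$ by approximating $\phi_0$ first by a locally constant potential of some length $k$ (which are $C^0$-dense in $C(X,\bR)$) and then perturbing within that finite-dimensional space into the open-and-dense subset $\cO_{k}$ analysed later in the paper. For such $\phi_n\in\cO_{k}$ the zero-temperature measure exists by Br\'emont, is a periodic-point measure, and coincides with the entropy-maximiser in $\cM_{\max}(\phi_n)$, so $h_{\infty,\phi_n}=0\not\to h_{\infty,\phi_0}$.

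Finally, the constant potential $\phi\equiv 0$ is computable and $h_{\infty,0}=\htop(f)>0$ by the standing assumption (any measure of maximal entropy maximises $\phi\equiv 0$), so by the dichotomy above $h_{\infty,\cdot}$ is discontinuous at $0$. Since any computable real-valued function on a computable metric space is continuous at every computable point, $h_{\infty,\cdot}$ cannot be computable. I expect the main obstacle to be the discontinuity direction of the dichotomy: the introduction already notes the incompatibility of the supremum topology with the finer H\"older/Lipschitz topologies in which classical uniquely-periodic-maximiser genericity results are stated, so these cannot be imported directly. My plan circumvents this difficulty by reducing to finite-dimensional slices of length-$k$ locally constant potentials, where the open-and-dense set $\cO_k$ can be constructed by explicit convex-geometric arguments (carried out later in the paper), and then leveraging $C^0$-density of locally constant potentials in $C(X,\bR)$.
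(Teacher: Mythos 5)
Your proposal is correct, and the most interesting part is that your route to upper semi-computability is genuinely shorter and cleaner than the paper's. You use the asymptotic formula
\begin{equation*}
h_{\infty,\phi}\;=\;\lim_{t\to\infty}\bigl(P(t\phi)-t\,b_\phi\bigr),
\end{equation*}
and the observation that $P(t\phi)-tb_\phi=\sup_{\mu\in\cM}\bigl(h_\mu(f)+t(\mu(\phi)-b_\phi)\bigr)$ is a supremum of affine functions with non-positive slopes, hence non-increasing in $t$; monotone convergence to $h_{\infty,\phi}$ then follows from compactness of $\cM$ and upper semi-continuity of $\nu\mapsto h_\nu(f)$, and upper semi-computability is immediate once one knows $\phi\mapsto \Ptop(\phi)$ and $\phi\mapsto b_\phi$ are $1$-Lipschitz and computable (the latter is the paper's Lemma~\ref{lem:computableI}). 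By contrast, the paper first shifts $\phi$ so that $\phi\ge 0$, proves Proposition~\ref{lem2} (locating some equilibrium state on a short $\beta$-interval and sandwiching its entropy by computable pressure differences), packages this into Lemma~\ref{lem:approxentropy}, and then invokes Lemma~\ref{lem1} to get monotone approach from above. Your approach dispenses with the nonnegativity reduction and with Proposition~\ref{lem2}/Lemma~\ref{lem:approxentropy} entirely, at the cost of having to verify the asymptotic identity, which is a short argument of exactly the type the paper already uses in Lemma~\ref{lem1}. One small inaccuracy: the right derivative of $t\mapsto P(t\phi)-tb_\phi$ is $\max_{\mu\in\ES(t\phi)}\mu(\phi)-b_\phi$, not $\mu_t(\phi)-b_\phi$ for an arbitrary equilibrium state $\mu_t$; it is still $\le 0$, so the conclusion you draw is unaffected, and the supremum-of-affine-functions observation is the more robust way to see the monotonicity anyway.

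For the continuity dichotomy and non-computability your argument runs parallel to the paper's. You prove upper semi-continuity of $\phi\mapsto h_{\infty,\phi}$ directly via compactness and upper semi-continuity of entropy on $\cM$, whereas the paper's Lemma~\ref{lem2a} phrases the same fact through the continuity of the localized entropy $\cH$; both are correct and of comparable length. The discontinuity direction (perturbing into $\cO_k$ using $C^0$-density of locally constant potentials) and the deduction of non-computability from discontinuity at a computable point with $h_{\infty,\phi_0}>0$ (you pick $\phi_0\equiv 0$ with $h_{\infty,0}=\htop(f)>0$) match the paper's Proposition~\ref{prop0acomp} and the surrounding discussion. There are no gaps.
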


In Section \ref{sec:compute:basic}, we introduce the definition for the function $\phi\mapsto h_{\infty,\phi}$ to be computable at a point $\phi_0$.  This definition provides a computable version of being continuous at a point.  Moreover, if the map $\phi\mapsto h_{\infty,\phi}$ restricted to a set $S$ is computable and $\phi_0$ is in the interior of $S$, then $\phi\mapsto h_{\infty,\phi}$ is computable at $\phi_0$.  With this definition in hand, a direct corollary of Theorem A is: 

\begin{corollary}\label{cor:computabilityentropy}
The function $\phi\mapsto h_{\infty,\phi}$ is computable at $\phi_0$ if and only if $h_{\infty,\phi_0}=0$.
\end{corollary}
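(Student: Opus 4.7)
My plan is to deduce the corollary directly from Theorem A by handling the two directions separately, using only two ingredients: the general fact that computability of a real-valued function at a point refines topological continuity at that point, and the non-negativity of measure-theoretic entropy.

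For the forward direction, suppose that $\phi\mapsto h_{\infty,\phi}$ is computable at $\phi_0$. Following the paradigm of computable analysis, in which every computable map between represented spaces is continuous, the definition introduced in Section~\ref{sec:compute:basic} is tailored so that pointwise computability at $\phi_0$ immediately implies topological continuity of $\phi\mapsto h_{\infty,\phi}$ at $\phi_0$ in the supremum norm. The continuity characterization in Theorem~A then forces $h_{\infty,\phi_0}=0$.

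For the reverse direction, assume $h_{\infty,\phi_0}=0$. I will exhibit an algorithm computing $h_{\infty,\phi_0}$ to arbitrary precision from any Cauchy name for $\phi_0$. By Theorem~A, $\phi\mapsto h_{\infty,\phi}$ is upper semi-computable on all of $C(X,\bR)$, so fed such a name it outputs a decreasing rational sequence $q_n \downarrow h_{\infty,\phi_0}=0$. Combining this with the trivial lower bound $h_{\infty,\phi_0}\ge 0$ yields the desired algorithm: given target accuracy $2^{-k}$, wait for an index $n$ with $q_n<2^{-k}$ (such $n$ must exist since $q_n\downarrow 0$) and return $0$; the output then differs from $h_{\infty,\phi_0}$ by less than $2^{-k}$.

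The only conceptual obstacle is verifying that the definition of computability at $\phi_0$ in Section~\ref{sec:compute:basic} is set up so that (i) it entails topological continuity at $\phi_0$, and (ii) the explicit procedure just described actually witnesses computability at $\phi_0$ in the sense of that definition. Both are routine once one unwinds the definition, and neither requires any further input from the thermodynamic formalism beyond Theorem~A and the non-negativity of entropy.
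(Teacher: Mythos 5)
Your proposal is correct and follows essentially the same route as the paper: the forward direction uses the continuity characterization from Theorem A together with the fact that pointwise computability implies continuity, and the reverse direction exploits upper semi-computability from Theorem A plus the non-negativity of entropy, waiting for the decreasing bound to drop below the target accuracy and returning that bound (you return $0$ and the paper returns the current upper bound $\chi(\psi,m_n)$, but both lie within $2^{-n}$ of $h_{\infty,\phi'}$ for every $\phi'$ consistent with the queried oracle data). The one place to be slightly more explicit is the step you defer as "routine": the definition of computability at $\phi_0$ demands accuracy not only for $\phi_0$ but for every $\phi'$ admitting an oracle $\psi'$ agreeing with $\psi$ on all queried bits, and this works precisely because the computation of the upper semi-computable Turing machine is then identical for $\psi$ and $\psi'$, so the same rational is still an upper bound on $h_{\infty,\phi'}$.
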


The second goal of this paper is to study the computability of the zero-temperature measure and its entropy for locally constant potentials.
We recall that $\mu\in \cM$ is an equilibrium state of $\phi\in C(X,\bR)$ if $\mu$ maximizes $h_\nu(f)+\nu(\phi)$ among all $\nu\in \cM$. If $\phi$ is H\"older continuous  (and, in particular, if $\phi$ is locally constant), then the equilibrium state is unique and we denote it by $\mu_\phi$. We say $\mu_{\infty,\phi}$ is the  zero-temperature measure of $\phi$ if $\mu_{\infty,\phi}=\lim_{\beta\to\infty}\mu_{\beta\phi}$, where the limit is taken in the weak$^\ast$ topology\footnote{We point out that, in the mathematical theory of the thermodynamic formalism,
it is customary to consider the inverse temperature $\beta=1/T$ (with $T$ being the temperature of the system) and to take the limit $\beta\to\infty$. We mention that the notation that is used for the inverse temperature in physics is $\beta=\frac{1}{k_BT}$, where $k_{B}$ is Boltzmann's constant, which can be taken equal to one in an appropriate system of units.}. We recall that, for locally constant potentials, the zero-temperature measure exists \cite{Br}.  Let  $LC(X,\bR)=\bigcup_{k\in \bN} LC_k(X,\bR)$ denote the space of locally constant potentials, where $LC_k(X,\mathbb{R})$ denotes the space of potentials that are constant on cylinders of length $k$.  Let $m_c(k)$ denote the cardinality of the set of  cylinders of $X$ of length $k$ (note that $m_c(k)\leq d^k$).  Then, we can identify $LC_k(X,\mathbb{R})$ with $\mathbb{R}^{m_c(k)}$, which makes $LC_k(X,\mathbb{R})$  a Banach space, when endowed with the standard norm.

We note that, for the purpose of studying  zero-temperature measures and their associated entropies, it suffices to  consider the space   $LC_k(X,\mathbb{R})\cap \overline{B}(0,1)$, where $\overline{B}(0,1)$ is the closed unit ball in $\mathbb{R}^{m_c(k)}$. This reduction follows since $\mu_{\infty,\phi}=\mu_{\infty,\alpha \phi}$ for all $\alpha>0$.  
To illustrate some of the difficulties when dealing with the  computability of $h_{\infty,\phi}$ and $\mu_{\infty,\phi}$, we consider the following basic example, see \cite{Br,WY}:
\begin{example}
Let $X$ be the full shift on two symbols, i.e., $X=\{0,1\}^\bN$, and let $f:X\to X$ be the shift map.  Let $0<\alpha_1,\alpha_2$ be computable real numbers with $\alpha_1\approx\alpha_2$.  Let $\phi\in LC_2(X,\bR)$ be given by the matrix $\begin{pmatrix}
\alpha_1&\alpha_2\\
\alpha_2&0
\end{pmatrix},$ where $\phi_{i,j}$ denotes the value of $\phi$ on the cylinder $\cC_2(ij)\eqdef\{x: x_1=i, x_2=j\}$.  It follows that, if $\alpha_1\not=\alpha_2$, then $\mu_{\infty,\phi}$ is a periodic point measure, and, in particular, $h_{\infty,\phi}=0$. On the other hand, if $\alpha_1=\alpha_2$, then $\mu_{\infty,\phi}$ is the unique measure of maximal entropy (i.e., the Parry measure) of the Golden mean shift, i.e., the SFT with transition matrix $A=
\begin{pmatrix}
1&1\\
1&0
\end{pmatrix}$. Furthermore,  $h_{\mu_{\infty,\phi}}(f)=\log \frac{1+\sqrt{5}}{2}$.

As mentioned above, any function determined by a Turing machine cannot distinguish between the cases $\alpha_1=\alpha_2$ and $(\alpha_1\approx\alpha_2$, but $\alpha_1\not=\alpha_2)$, i.e., the condition $\alpha_1=\alpha_2$ is undecidable.  More precisely, any Turing machine for computing the entropy of $\phi$ can query $\alpha_1$ and $\alpha_2$ to arbitrary, but finite precision.  If $\alpha_1$ and $\alpha_2$ agree up to the queried precision, then the algorithm cannot distinguish $\alpha_1$ and $\alpha_2$, and, therefore, it cannot decide whether they are equal.  Consequently, neither $h_{\infty,\phi}$ nor $\mu_{\infty,\phi}$ are computable. 
\end{example}

To overcome these difficulties, we break the space of potentials $LC_k(X,\mathbb{R})\cap \overline{B}(0,1)$ into three sets with distinct computability properties, namely,
\begin{equation}\label{eq:decompositionk}
LC_k(X,\bR)\cap \overline{B}(0,1)=\cO_k\, \dot\cup\,  \cU_k\, \dot\cup\, \cV_k.
\end{equation}
We explicitly define the three sets and identify their properties:
\begin{enumerate}[(a)]
\item $\cO_k$ is the set of uniquely maximizing potentials $\phi\in LC_k(X,\bR)\cap \overline{B}(0,1)$. Moreover, the unique maximizing  measure of $\phi$ is a $k$-elementary periodic point measure.  Additionally, $\cO_k$ is open and dense in $LC_k(X,\bR)\cap\overline{B}(0,1)$.
\item $\cO_k\,\dot\cup\,\cU_k$ is the set of potentials $\phi\in LC_k(X,\bR)\cap \overline{B}(0,1)$ with $h_{\infty,\phi}=0$.  Therefore, $\cU_k$ is the set of potentials with more than one ergodic maximizing measure, all of which are  $k$-elementary periodic point measures.\footnote{We note that this condition implies $h_{\infty,\phi}=0$ for all $\phi \in \cU_k$, see \cite{WY}.} 
Furthermore, for $\phi\in \cU_k$, the measure $\mu_{\infty,\phi}$ is a convex combination of these $k$-elementary periodic point measures.  It follows that $\cO\,\dot\cup\,\cU_k$ is an open set in $LC_k(X,\bR)\cap\overline{B}(0,1)$.
\item $\cV_k$ is the set of potentials $\phi\in LC_k(X,\bR)\cap \overline{B}(0,1)$ with $h_{\infty,\phi}>0$.  It follows that $\cV_k$ is a closed set in $LC_k(X,\bR)\cap\overline{B}(0,1)$.
\end{enumerate}
The properties of the sets described in this partition follow from results in \cite{WY}, where a similar topological partition is considered. We note that  the statement that $\cO_k\,\dot\cup\,\cU_k$ is open is not explicitly proven in \cite{WY}, but its proof
 is analogous to the proof that $\cO_k$ is open.

To be able to make statements about the computability of the sets $\cO_k$ and $\cO_k\,\dot\cup\, \cU_k$, we briefly recall the notion of recursively open sets.  Namely, we say an open set $S$ 
is recursively open if there exists a Turing machine which produces, for each $n\in \bN$, a ball $B_n$ in $X$ such that $S=\bigcup_n B_n$, see Section \ref{sec:compute:basic} for details.

We prove the following result:

\begin{thmB}
Let $k\in \bN$ be given.  Then, the following hold:
\begin{enumerate}[(a)]
\item
The maps $\phi\mapsto\mu_{\infty,\phi}$ and $\phi\mapsto h_{\infty,\phi}$ are computable functions on $\cO_k\subset LC_k(X,\bR)$.  Furthermore, the set $\cO_k$ is a recursively open set;
 \item
The map $\phi\mapsto h_{\infty,\phi}$ is a computable function on $\cO_k\,\dot\cup\,\cU_k\subset LC_k(X,\bR)$.  For any $\phi_0\in\cU_k$, the map $\phi\mapsto \mu_{\infty,\phi}$ is not continuous (and hence not computable) at $\phi_0$ in $\cO_k\,\dot\cup\,\cU_k$.  Furthermore, the set $\cO_k\,\dot\cup\,\cU_k$ is a recursively open set; and
\item
For any $\phi_0\in\cV_k$, neither the map $\phi\mapsto h_{\infty,\phi}$ nor the map $\phi\mapsto \mu_{\infty,\phi}$ are continuous (and hence not computable) at $\phi_0$ in $LC_k(X,\bR)$.
\end{enumerate}
\end{thmB}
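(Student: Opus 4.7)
The plan is to first exploit the fact that, for $\phi\in LC_k(X,\bR)$, ergodic optimization reduces to a finite-combinatorial problem on a fixed weighted digraph $G_k$ whose vertices are admissible $(k-1)$-words, whose edges correspond to admissible $k$-cylinders, and whose weight on an edge equals $\phi$ on the corresponding cylinder. Classical results then identify $b_\phi$ with the maximum cycle mean of $(G_k,\phi)$, the ergodic maximizing measures with the $k$-elementary periodic point measures on simple cycles of maximal mean, and the full set $\cM_{\max}(\phi)$ with the invariant measures of the sub-SFT $Y_\phi\subseteq X$ generated by the union of these maximizing cycles. In particular, $h_{\infty,\phi}=\htop(Y_\phi)$ takes values in the finite set $E_k=\{\htop(H):H\text{ a subgraph of }G_k\}$, so $\delta_k\eqdef\min(E_k\setminus\{0\})$ is a positive, explicitly computable real number, each element of $E_k$ being the logarithm of the spectral radius of an integer matrix.

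For part (a), I would use that $\phi\in\cO_k$ iff exactly one simple cycle of $G_k$ attains the maximum cycle mean. Enumerating the finitely many simple cycles and computing their means from $\phi$ to successively higher precision will certify a strict winner whenever one exists, which is a semi-decision procedure for membership in $\cO_k$ and hence proves recursive openness. Once such a certificate is obtained, the winning cycle determines an explicit periodic orbit, and $\mu_{\infty,\phi}$ is the uniform atomic measure on this orbit, whose cylinder values are rational and easily computed; thus $\phi\mapsto\mu_{\infty,\phi}$ is computable on $\cO_k$, while $\phi\mapsto h_{\infty,\phi}$ is trivially computable since it is identically zero on $\cO_k$.

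For part (b), by definition every ergodic maximizing measure on $\cO_k\,\dot\cup\,\cU_k$ is a periodic point measure, so $h_{\infty,\phi}=0$ there and the entropy map is computable for free. Recursive openness then follows from the observation that $\phi\in\cO_k\,\dot\cup\,\cU_k$ iff $h_{\infty,\phi}<\delta_k$, a strict inequality that is semi-decidable by the upper semi-computability in Theorem A. To prove discontinuity of $\phi\mapsto\mu_{\infty,\phi}$ at $\phi_0\in\cU_k$, I would use a tie-breaking perturbation: pick two distinct simple cycles $\gamma_1,\gamma_2$ in the maximizing subgraph of $\phi_0$ and an edge $e$ of $\gamma_1$ not lying on $\gamma_2$, and set $\phi_\e\eqdef\phi_0+\e\chi_e$, where $\chi_e$ is the indicator of the $k$-cylinder corresponding to $e$. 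For all small $\e>0$, $\phi_\e$ lies in $\cO_k$ with unique maximizing cycle $\gamma_1$, so $\mu_{\infty,\phi_\e}\to\mu_{\gamma_1}$ as $\e\to 0^+$, whereas $\mu_{\infty,\phi_0}$ is a nontrivial convex combination of the periodic point measures on the maximizing cycles and is therefore distinct from $\mu_{\gamma_1}$.

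For part (c), the same perturbation strategy applies: pick any simple cycle $\gamma^*$ in the maximizing subgraph of $\phi_0\in\cV_k$ and add $\e\chi_{\gamma^*}$, the indicator of the union of $k$-cylinders corresponding to edges of $\gamma^*$. For all small $\e>0$, the perturbed potential lies in $\cO_k$, so its residual entropy is $0$, whereas $h_{\infty,\phi_0}>0$ by hypothesis, forcing discontinuity of $\phi\mapsto h_{\infty,\phi}$; the measure map is discontinuous for the same reason, since the perturbed $\mu_{\infty,\phi_\e}$ is a periodic point measure (zero entropy) while $\mu_{\infty,\phi_0}$ has entropy $h_{\infty,\phi_0}>0$. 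The main obstacle I foresee is justifying cleanly that $h_{\infty,\phi}=\htop(Y_\phi)$ and that the attainable values of $h_{\infty,\phi}$ indeed form the finite set $E_k$ with a uniformly computable positive gap $\delta_k$; once these structural facts, essentially contained in the analysis of \cite{WY}, are in place, the semi-decidability arguments follow from Theorem A as sketched.
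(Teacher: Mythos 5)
Your proposal is correct and, in parts (a) and (c), takes essentially the same route as the paper: enumerate the finitely many $k$-elementary periodic orbits (simple cycles of the higher-block digraph $G_k$), compute their $\phi$-means to increasing precision, and certify a strict winner (part (a)), or perturb explicitly toward a single maximizing cycle to break the tie and force discontinuity (part (c)); the paper's version of (c) instead simply invokes density of $\cO_k$ and finiteness of $\EPer^k(f)$ to extract a subsequence with constant maximizer, which is existential rather than constructive but reaches the same contradiction.

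The genuinely different step is your proof of recursive openness of $\cO_k\,\dot\cup\,\cU_k$. The paper's argument is purely combinatorial: run the cycle-enumeration with ever higher precision and halt once the surviving superset of $\EPer^k_{\max}(\phi)$ consists of orbits with pairwise disjoint $k$-cylinder support, then output a ball of radius half the certified gap to the next-largest cycle mean. You instead observe that $\cO_k\,\dot\cup\,\cU_k=\{\phi:h_{\infty,\phi}<\delta_k\}$, where $\delta_k$ is the smallest positive value in the finite set $E_k$ of topological entropies of subgraphs of $G_k$, and then reduce the semi-decision to Theorem A's upper semi-computability of $\phi\mapsto h_{\infty,\phi}$. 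This buys a pleasingly tight link between Theorems A and B, but at the price of needing the structural inputs you flagged: that $h_{\infty,\phi}=\htop(Y_\phi)$ with $Y_\phi$ a subgraph of $G_k$ (so $h_{\infty,\phi}\in E_k$), and that $\delta_k>0$ is computable (each element of $E_k$ is $\log$ of the spectral radius of a $\{0,1\}$-matrix, and deciding $\lambda=1$ versus $\lambda>1$ is a finite graph-theoretic test). These facts are indeed contained in \cite{WY} and in standard ergodic-optimization theory for locally constant potentials, so your route is sound; the paper's direct certification avoids them and is closer to a self-contained algorithm. One small point of care in your tie-breaking argument for $\phi_0\in\cU_k$: the edge $e$ of $\gamma_1$ should be chosen outside the support of \emph{every} other maximizing cycle, not just $\gamma_2$; this is automatic in $\cU_k$ because the maximizing cycles there have pairwise disjoint $k$-cylinder support, but the phrasing as written only excludes $\gamma_2$.
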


We point out that in the statement of Theorem B the number $k$ (i.e., the cylinder length on which the potentials are constant) is given, and, in particular, is not determined by the Turing machine that queries an oracle of the potential.  One might suspect that either $k$ can be calculated or that some of the results in Theorem $B$ generalize to $LC(X,\bR)$ without specifying $k$.  For instance, recall that $\cO=\bigcup_k \cO_k$ denotes the set of locally constant potentials that that are uniquely maximizing.  One might hope that $\cO$ is a recursively open set, i.e., that membership in $\cO$ is semi-decidable.  A first indication that this could not be true is given
Example \ref{ex4.1}  where it is shown that $\cO$ is not open in $LC(X,\bR)$. In fact, we have the following even stronger result from Proposition \ref{prop:UV}:
\begin{thmC}
 Let $f:X\to X$ be a transitive SFT with positive topological entropy. Then the set $\cO$ has no interior points in $LC(X,\bR)$.
 \end{thmC}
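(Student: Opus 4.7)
The plan is to show that every $\phi_0 \in \cO$ admits arbitrarily small sup-norm perturbations to potentials in $LC(X,\bR)\setminus\cO$. Write $\phi_0 \in LC_{k_0}(X,\bR)$ with unique maximizing measure $\mu_{O_0}$ supported on a periodic orbit $O_0$ of period $\ell_0$, and fix $\epsilon>0$. Since $f$ is transitive with positive topological entropy, it has the specification property and periodic measures are dense in $\cM$; hence there exists a periodic orbit $O_1\neq O_0$, of some period $\ell_1$, with $\delta_1:=b_{\phi_0}-\bar{\phi_0}(O_1)<\epsilon$. Pick $k_1\geq \max(k_0,\ell_0+\ell_1)$, so that by the Fine--Wilf theorem no length-$k_1$ word appears in both $O_0$ and $O_1$. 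Let $A_1$ be the union of the length-$k_1$ cylinders visited by $O_1$, so that $\mu_{O_0}(A_1)=0$ and $\mu_{O_1}(A_1)=1$, and set $\eta:=\1_{A_1}\in LC_{k_1}(X,\bR)$.

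Now I consider the one-parameter family $\psi_t:=\phi_0+t\eta$, $t\geq 0$. Since $\eta\geq 0$, the function $t\mapsto b_{\psi_t}$ is non-decreasing; being a supremum of the affine functions $t\mapsto\nu(\phi_0)+t\,\nu(A_1)$, it is also convex, with $b_{\psi_0}=b_{\phi_0}$. Moreover $\mu_{O_0}(\psi_t)\equiv b_{\phi_0}$ and $\mu_{O_1}(\psi_t)=b_{\phi_0}-\delta_1+t$, which equals $b_{\phi_0}$ at $t=\delta_1$, so $b_{\psi_{\delta_1}}\geq b_{\phi_0}$. Define
\[
t^\dag:=\sup\bigl\{t\in[0,\delta_1]:b_{\psi_t}=b_{\phi_0}\bigr\}.
\]

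If $t^\dag=\delta_1$, then both $\mu_{O_0}$ and $\mu_{O_1}$ are ergodic maximizing measures of $\psi_{\delta_1}$, so $\psi_{\delta_1}\notin\cO$. If $t^\dag<\delta_1$, then $b_{\psi_{t^\dag}}=b_{\phi_0}$ while $b_{\psi_t}$ strictly increases for $t>t^\dag$, so the right derivative $\partial_+ b_{\psi_t}|_{t^\dag}$ is strictly positive. By the standard subdifferential formula for a sup of affine functions, this right derivative equals $\max\{\nu(A_1):\nu\in\cM_{\max}(\psi_{t^\dag})\}$, so there is a maximizing measure $\nu^*$ with $\nu^*(A_1)>0$. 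Taking an ergodic component of $\nu^*$ that still has positive $A_1$-mass (possible since $\nu^*(A_1)=\int \nu^*_x(A_1)\,d\nu^*(x)>0$) yields an ergodic maximizing measure distinct from $\mu_{O_0}$, as $\mu_{O_0}(A_1)=0$. Hence $\psi_{t^\dag}$ has at least two distinct ergodic maximizing measures and $\psi_{t^\dag}\notin\cO$. In either scenario $\|\psi_{t^\dag}-\phi_0\|_\infty=t^\dag\leq\delta_1<\epsilon$, and the proof is complete.

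The main technical content sits in the opening step, where transitivity and positive topological entropy (via specification) are used to extract a competitor periodic orbit $O_1$ whose $\phi_0$-average is arbitrarily close to $b_{\phi_0}$. The rest is a clean convex-analytic argument on the one-parameter family $\psi_t$: it deliberately tracks the value function $b_{\psi_t}$ rather than individual orbit averages, which sidesteps the possibility that some $\psi_t$ with $0<t<\delta_1$ might lie in $\cV_{k_1}$ (and hence have non-periodic maximizers), since the subdifferential formula at $t^\dag$ automatically produces a second ergodic maximizer of $\psi_{t^\dag}$ whether that measure is periodic or not.
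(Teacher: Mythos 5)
Your approach is structurally the same as the paper's Proposition~\ref{prop:UV}: pick a competitor periodic orbit whose $\phi_0$-average is close to $b_{\phi_0}$, perturb $\phi_0$ by a small multiple of the indicator of the competitor's cylinder support, and locate the breakpoint along the resulting one-parameter family where the maximizing set must grow. The paper realizes the competitor concretely as shadow-orbits $z_m=\Or(\tau_x^m y)$ and tracks the finite set $\EPer_{\max}^\ell$ directly; you instead invoke density of periodic measures to get $O_1$, use Fine--Wilf to make the $k_1$-cylinder supports of $O_0$ and $O_1$ disjoint (a clean touch that makes the bookkeeping lighter than the paper's), and then phrase the breakpoint argument via Danskin's envelope formula.

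There is, however, a gap in your Case~2 ($t^\dag<\delta_1$). You assert that because $t\mapsto b_{\psi_t}$ is constant on $[0,t^\dag]$ and strictly increasing for $t>t^\dag$, the right derivative $\partial_+ b_{\psi_t}\vert_{t^\dag}$ must be strictly positive. That implication is false for a general convex function: $b(t)=\max(0,t)^2$ is convex, constant on $(-\infty,0]$, strictly increasing on $(0,\infty)$, yet $\partial_+b(0)=0$, and in that situation Danskin's formula would hand you only maximizers with $\nu(\eta)=0$, i.e., no second ergodic maximizer. The implication does hold in your setting, but only because of additional structure you do not invoke: since $\phi_0,\eta\in LC_{k_1}(X,\bR)$, Equation~\eqref{eperconv} gives $b_{\psi_t}=\max\{\mu_z(\phi_0)+t\,\mu_z(\eta):z\in\EPer^{k_1}(f)\}$, a maximum over the \emph{finite} set $\EPer^{k_1}(f)$, hence a piecewise-affine convex function of $t$. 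For a piecewise-affine convex function, ``constant, then strictly increasing'' does force a positive right slope at the breakpoint. Once you add this finiteness observation the argument closes --- and in fact closes more directly: at $t^\dag$ two of the finitely many affine pieces tie for the maximum, one with slope $\mu_{O_0}(\eta)=0$ and one with slope $>0$, and the corresponding two $k_1$-elementary periodic point measures are already distinct ergodic maximizers of $\psi_{t^\dag}$. This makes both Danskin's formula and the ergodic-decomposition step unnecessary, and it is exactly the finiteness that the paper keeps in the foreground by working with $\EPer_{\max}^\ell$ throughout.
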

 As noted above, Theorem C indicates that, from the point of view of computable analysis, there are significant differences between the cases of a given and of an arbitrary cylinder length. On the other hand, Theorem C should also be of theoretical interest in ergodic optimization. This is, in part, as it displays a sharp contrast between the locally constant case and the  Lipschitz case (in the Lipschitz topology) since for the latter the set of  potentials with a uniquely maximizing periodic point measure is open and dense
 in the space of all Lipschitz potentials, see Contreras's Theorem \cite{C}.
 
 \subsection{Outline of paper}
 
The remainder of this paper is organized as follows: In Section~\ref{sec:2}, we review some
 concepts from symbolic dynamics, the thermodynamic formalism, and computational analysis. Moreover, we establish some preliminary results about the residual entropy.
In Section~\ref{sec:3}, we discuss the computability of the residual entropy as a function on the space of continuous potentials for SFTs. 
Section~\ref{sec:4} is devoted to the study of the computability of the zero-temperature measure for locally constant potentials that are constant on cylinders of a given length. 
 Finally, in Section~\ref{sec:5}, we provide a proof of Theorem C.

\section{Settings and Generalities}\label{sec:2}

In this section, we introduce the relevant background material and obtain preliminary results.  In particular, we provide overviews of the pertinent results and definitions from shift spaces, zero-temperature measures, ground states, locally constant potentials, and computability theory.

\subsection{Shift maps}

In this section, we recall the relevant material from symbolic dynamics, see, e.g., \cite{Kit} for more details.  Let $\cA=\{0,\cdots,d-1\}$ be a finite alphabet with $d$ symbols. The (one-sided) shift space $\Sigma_d$ on the alphabet $\cA$ is the set of
all sequences $x=(x_n)_{n=1}^\infty$, where $x_n\in \cA$ for all $n\in \bN$.  We endow $\Sigma_d$ with the {\em Tychonov product topology},
which makes $\Sigma_d$ a compact metrizable space. Given $0<\theta<1$, the distance function
\begin{equation}\label{defmet}
d(x,y)=d_\theta(x,y)\eqdef\theta^{\inf\{n\in \bN\,:\,x_n\not=y_n\}}\quad {\rm and}\quad d(x,x)=0
\end{equation}
defines a metric which induces this topology on $X$.
The shift map $f:\Sigma_d\to \Sigma_d$ (defined by $f(x)_n=x_{n+1}$) is a continuous $d$ to $1$ map on $\Sigma_d$.
If $X\subset \Sigma_d$ is an $f$-invariant set, then we  say  that $f|_X$ is a subshift. In the following, we use the symbol $X$ for any shift space including the full shift $X=\Sigma_d$.

Given $x\in X$, we write $\pi_k(x)=x_1\cdots x_k$ for the initial segment of length $k$ of $x$. We call $\tau=\tau_1\cdots\tau_k\in \cA^k$ a segment of length $k$ or simply a segment, when the length $k$ is understood. Moreover, we denote  the \emph{cylinder} generated by $\tau$ by $$\cC_k(\tau)=\{x\in X: x_1=\tau_1,\cdots, x_n=\tau_k\}.$$  Given $x\in X$ and $k\in \bN$, we call $\cC_k(x)=\cC_k(\pi_k(x))$ the cylinder of length $k$ generated by $x$, i.e., the cylinder consisting of all $y\in X$ that agree on the first $k$ values.   We denote the \emph{periodic point} generated by $\tau$ by $$\Or(\tau)=\tau_1 \cdots \tau_k \tau_1 \cdots \tau_k\tau_1 \cdots \tau_k\cdots,$$ provided $\Or(\tau)\in X$.  
We denote the set of all periodic points of $f$ with period $n$ by $\Per_n(f)$. Moreover, $\Per(f)=\bigcup_{n\geq1} \Per_n(f)$ denotes the set of all periodic points of $f$. If $n=1$, then we say that $x$ is a fixed point of $f$. In the following, we always assume that $n$ is the {\em prime period} of $x$, i.e., $n$ is the smallest index so that $x\in\Per_n(f)$.
For $x\in \Per_n(f)$, we call $\tau_x=x_1\cdots x_{n}$ the {\em generating segment} of  $x$, that is $x=\Or(\tau_x)$.
Let $k\in \bN$ be fixed. We define the {\em $k$-cylinder support} of $x\in \Per_n(f)$ by 
\begin{equation}\label{defkcyl}
\cS_k(x)=\{\cC_k(f^i(x)): i\in \bN\cup\{0\}\}=\{\cC_k(f^i(x)): i= 0,\dots,n-1\}.
\end{equation}
Moreover, we say that $x\in \Per_n(f)$  is a {\em $k$-elementary periodic point}  if $\cC_k(f^i(x))\not=\cC_k(f^j(x))$ for all $i,j=0,\cdots,n-1$ with $i\not=j$. When $k=1$, we simply say that $x$ is an {\em elementary periodic point}.  We denote the set of all $k$-elementary periodic points by $\EPer^k(f)$.  We recall that $m_c(k)$ denotes the cardinality of the set of cylinders of length $k$ in $X$.  Then, it follows that the period of any $k$-elementary periodic point is at most $m_c(k)$, and, thus, $\EPer^k(f)$ is finite. 
For $x\in \Per_n(f)$, we denote the unique invariant measure supported on the orbit of $x$ by $\mu_x$, that is $\mu_x=\frac{1}{n}\sum_{i=0}^{n-1} \delta_{f^i(x)}$. For $\phi\in C(X,\bR)$, we obtain the formula
\begin{equation}\label{muxphi}
\mu_x(\phi)=\frac{1}{n}\sum_{i=0}^{n-1} \phi(f^i(x)).
\end{equation}
A particular class of shift maps are SFTs.
These shift maps can be defined as follows: Suppose $A$ is a $d\times d$ matrix with values in $\{0,1\}$, then consider the set of sequences given by  
$X=X_A=\{x\in \Sigma_d: A_{x_n,x_{n+1}}=1\}.$  The set $X_A$ is a closed (and, therefore, compact) $f$-invariant set, and we say that $f|_{X_A}$ a {\em subshift of finite type (SFT)}.

We say  $f$ is transitive if it has a dense orbit. In particular, a SFT with transition matrix $A$ is transitive if and only if $A$ is irreducible, that is, for each $i$ and $j$, there exists an $n\in \bN$ such that $A^n_{i,j} > 0.$

\subsection{Topological pressure, ground states and zero-temperature measures.}
In this section, we briefly recall the relevant facts about the topological pressure; for more details, see, e.g., \cite {Wal:81}. Let $f:X\to X$ be a transitive SFT. The topological pressure of $\phi\in C(X,\bR)$ is defined as
 \begin{equation}\label{varprinciple}
 \Ptop(\phi)=\sup_{\mu\in \cM}\left(\h_\mu(f)+\mu(\phi)\right),
 \end{equation}
 where $\h_\mu(f)$ denotes the measure-theoretic entropy of $\mu$. Moreover, $\htop(f)=\Ptop(0)$ denotes the {\em topological entropy} of $f$.
We recall that if $\nu\in\cM$ satisfies $\Ptop(\phi)=h_\nu(f)+\nu(\phi)$, then $\nu$ is an \emph{equilibrium state} of $\phi$. We denote the set of equilibrium states of $\phi$ by $\ES(\phi)$. Since the entropy map $\nu\mapsto h_\nu(f)$ is upper semi-continuous, $\ES(\phi)$ is nonempty. Furthermore, $\ES(\phi)$ is a compact and convex set whose extreme points are the ergodic equilibrium states. 

We say  $\mu\in\cM$ is a {\em ground state} of the potential $\phi$ if there exists a sequence $\beta_n\to\infty$ and equilibrium states $\mu_{n}\in \ES(\beta_n\phi)$ such that $\mu=\lim_{n\to\infty}\mu_n$.  Here, we think of $\beta$ as the inverse temperature of the system, see the discussion in Section \ref{sec:results}. Thus, ground states are accumulation points of equilibrium states as the temperature approaches zero.  We denote the set of ground states of $\phi$ by $\GS(\phi)$.  By compactness, $\GS(\phi)$ is nonempty.

Next, we consider the case where $\beta\phi$ has a unique equilibrium state $\mu_\beta=\mu_{\beta\phi}$ for all $\beta\geq 0$.
This case occurs, for example, when $\phi$ is H\"older continuous.
We say $\mu_{\infty,\phi}\in \cM$ is the 
{\em zero-temperature measure} of $\phi$ if $\mu_{\infty,\phi}=\lim_{\beta\to \infty} \mu_{\beta}$.  We note that, in general, the uniqueness of the equilibrium states of $\beta \phi$ does not guarantee the existence of the zero-temperature measure, see, e.g., \cite{BGT,CH,CR}.

\subsection{Entropy of ground states}\label{sec: ground states}
We continue to use the definitions from Section \ref{sec:results}.
Let $\phi\in C(X,\bR)$. 
For  $w\in I(\phi)$, we define
$$
\cH(w)=\cH_\phi(w)=\sup \left\{h_\mu(f): \mu(\phi)=w\right\}
$$
to be the {\em localized entropy} at $w$, see, e.g., \cite{Je,KW1}. Since $\nu\mapsto h_\nu(f)$ is affine and upper semi-continuous on $\cM$, we conclude that $ \cH$ is concave and upper semi-continuous and, therefore, continuous.  We recall that $\cH(b_\phi)$  coincides with the residual entropy $h_{\infty,\phi}$ of the potential $\phi$.
We make use of the following two lemmas to understand the behavior of the entropy as $\beta\rightarrow\infty$:
\begin{lemma}\label{lem1}
Let $(\beta_n)_n$ with $ \beta_n\in\bR^+$ be a strictly increasing sequence converging to $\infty$. Then, for any sequence of measures $(\mu_n)_n$, where $\mu_n\in \ES(\beta_n\phi)$, we have:
\begin{enumerate}[(a)]
\item $b_\phi-\mu_n(\phi)\leq \htop(f)/ \beta_n.$  Moreover, the sequence $(\mu_n(\phi))_n$ is increasing with $\lim_{n\to\infty} \mu_n(\phi)=b_\phi$;
\item $(h_{\mu_n}(f))_n$ is decreasing with $\lim_{n\to\infty} h_{\mu_n}(f)=h_{\infty,\phi}$; and
\item If $\mu\in \GS(\phi)$, then $h_\mu(f)=h_{\infty,\phi}$.
\end{enumerate}
\end{lemma}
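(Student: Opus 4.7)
The plan is to reduce all three parts to the defining variational inequality for equilibrium states together with the upper semi-continuity of the entropy map $\mu\mapsto h_\mu(f)$. For part (a), I would pick any $\nu\in\cM_{\max}(\phi)$ (nonempty by compactness of $\cM$ and continuity of $\mu\mapsto\mu(\phi)$). Since $\mu_n\in\ES(\beta_n\phi)$ and $\nu(\phi)=b_\phi$,
$$h_{\mu_n}(f)+\beta_n\mu_n(\phi)=\Ptop(\beta_n\phi)\geq h_\nu(f)+\beta_n b_\phi.$$
Rearranging yields $\beta_n(b_\phi-\mu_n(\phi))\leq h_{\mu_n}(f)-h_\nu(f)\leq\htop(f)$, which is precisely the required estimate; dividing by $\beta_n\to\infty$ then forces $\mu_n(\phi)\to b_\phi$. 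Monotonicity of $(\mu_n(\phi))_n$ follows from the standard two-measure swap: for $\beta_n<\beta_m$, adding the equilibrium inequality for $\mu_n$ tested against $\mu_m$ to that for $\mu_m$ tested against $\mu_n$ makes the entropies cancel and gives $(\beta_m-\beta_n)(\mu_m(\phi)-\mu_n(\phi))\geq 0$.

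For part (b), the equilibrium inequality for $\mu_n$ tested against $\mu_m$ rearranges to $h_{\mu_n}(f)-h_{\mu_m}(f)\geq\beta_n(\mu_m(\phi)-\mu_n(\phi))$, and the right-hand side is nonnegative by (a), so $(h_{\mu_n}(f))_n$ is decreasing. The lower bound $h_{\mu_n}(f)\geq h_{\infty,\phi}$ drops out of the displayed inequality of (a): it holds for every $\nu\in\cM_{\max}(\phi)$, and since $\beta_n(b_\phi-\mu_n(\phi))\geq 0$ it rearranges to $h_{\mu_n}(f)\geq h_\nu(f)$; taking the supremum over $\nu\in\cM_{\max}(\phi)$ gives $h_{\mu_n}(f)\geq h_{\infty,\phi}$. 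For the matching upper bound on the limit, pick any weak$^*$-convergent subsequence $\mu_{n_k}\to\mu$; by (a) we have $\mu(\phi)=b_\phi$, so $\mu\in\cM_{\max}(\phi)$, and upper semi-continuity of entropy yields
$$\lim_{n\to\infty}h_{\mu_n}(f)=\lim_{k\to\infty}h_{\mu_{n_k}}(f)\leq h_\mu(f)\leq h_{\infty,\phi}.$$

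Part (c) is then almost immediate: given $\mu\in\GS(\phi)$ written as $\mu=\lim_k \mu_{n_k}$ with $\mu_{n_k}\in\ES(\beta_{n_k}\phi)$ and $\beta_{n_k}\to\infty$, pass to a strictly increasing subsequence and apply (b) to conclude $h_{\mu_{n_k}}(f)\to h_{\infty,\phi}$. Upper semi-continuity gives $h_\mu(f)\geq\limsup_k h_{\mu_{n_k}}(f)=h_{\infty,\phi}$, while $\mu\in\cM_{\max}(\phi)$ (again from $\mu_{n_k}(\phi)\to b_\phi$) combined with the definition of $h_{\infty,\phi}$ gives the reverse inequality. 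There is no genuine obstacle, as the argument is essentially formal; the only points that require care are the direction of the upper semi-continuity inequality and the observation that the defining sequence for the ground state $\mu$ in (c) need not be the one fixed in (a)–(b), which is handled by restricting to an increasing subsequence before invoking (a)–(b).
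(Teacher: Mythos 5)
Your proof is correct, and parts (a) and (c) line up with the paper's argument: (a) via the variational inequality against a maximizing measure plus the two-measure swap, and (c) via upper semi-continuity of entropy applied along the defining subsequence of a ground state. (You are right that one should be slightly careful in (c) about passing to a strictly increasing subsequence; the paper treats this step more briefly.) The interesting divergence is the second half of (b), the convergence $h_{\mu_n}(f)\to h_{\infty,\phi}$. The paper leans on the localized entropy function $\cH(w)=\sup\{h_\mu(f):\mu(\phi)=w\}$: it records that an equilibrium state of $\beta_n\phi$ realizes $h_{\mu_n}(f)=\cH(\mu_n(\phi))$, that $\cH(b_\phi)=h_{\infty,\phi}$, and then invokes the continuity of $\cH$ (established earlier from concavity plus upper semi-continuity of entropy) together with part (a). You instead avoid $\cH$ entirely and run a two-sided squeeze: the lower bound $h_{\mu_n}(f)\ge h_{\infty,\phi}$ comes directly from the variational inequality by taking the supremum over $\nu\in\cM_{\max}(\phi)$, and the matching upper bound on the limit comes from a weak$^*$ compactness argument plus upper semi-continuity. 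Both routes are valid; the paper's is shorter given that $\cH$ has already been set up for the rest of the paper, while yours is more self-contained and makes the role of upper semi-continuity explicit in (b) rather than hiding it inside the continuity of $\cH$. As a side effect, your lower bound even shows the stronger fact that $h_{\mu_n}(f)\ge h_{\infty,\phi}$ for every $n$, not just in the limit, which the paper does not record.
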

\begin{proof}
Let $\mu\in\cM$ such that $\mu(\phi)=b_\phi$ and $h_{\mu}(f)=\cH(b_\phi)$.  Since $\mu_n$ is an equilibrium state of $\beta_n\phi$, it follows that $h_{\mu_n}(f)+\beta_n\mu_n(\phi)\geq h_{\mu}(f)+\beta_n b_\phi$.  Therefore, $\beta_n(b_\phi-\mu_n(\phi))\leq h_{\mu_n}(f)\leq\htop(f)$ and the first and last parts of Statement $(a)$ follow.

For the remaining part of Statement $(a)$, we observe that since $\mu_n$ and $\mu_{n+1}$ are equilibrium states for $\beta_n\phi$ and $\beta_{n+1}\phi$, respectively, it follows that $h_{\mu_n}(f)+\beta_n\mu_n(\phi)\geq h_{\mu_{n+1}}(f)+\beta_n\mu_{n+1}(\phi)$ and $h_{\mu_{n+1}}(f)+\beta_{n+1}\mu_{n+1}(\phi)\geq h_{\mu_{n}}(f)+\beta_{n+1}\mu_{n}(\phi)$.  Eliminating the entropies from these inequalities leads to $(\beta_{n+1}-\beta_n)(\mu_{n+1}(\phi)-\mu_n(\phi))\geq0$.  Since the $\beta_n$'s are strictly increasing, the final part of Statement $(a)$ follows.

The first part of Statement $(b)$ follows directly from Statement $(a)$ and the inequality $h_{\mu_n}(f)+\beta_n\mu_n(\phi)\geq h_{\mu_{n+1}}(f)+\beta_n\mu_{n+1}(\phi)$.  For the second part of Statement $(b)$, since $\mu_n$ is an equilibrium state of $\beta_n\phi$, it follows that $h_{\mu_n}(f)=\cH(\mu_n(\phi))$.  We recall that $\cH(b_\phi)=h_{\infty,\phi}$.  Then, by Statement (a) and the continuity of $\cH$, the second part of Statement $(b)$ follows.  

Finally, Statement $(c)$ follows from Statement $(b)$ and the upper semi-continuity of the entropy map.
\end{proof}

\begin{lemma}\label{lem2a}Let $\phi_0\in C(X,\bR)$ with $h_{\infty,\phi_0}=0$. Then, $\phi\mapsto h_{\infty,\phi}$ is continuous at $\phi_0$.
\end{lemma}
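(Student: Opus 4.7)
My plan is to prove the stronger statement that $\phi\mapsto h_{\infty,\phi}$ is upper semi-continuous on all of $C(X,\bR)$; since $h_{\infty,\phi}\geq 0$ always and $h_{\infty,\phi_0}=0$ by hypothesis, upper semi-continuity at $\phi_0$ immediately forces continuity at $\phi_0$. So the real work is to establish $\limsup_{\phi\to\phi_0} h_{\infty,\phi}\leq h_{\infty,\phi_0}$.

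Let $(\phi_n)$ be any sequence in $C(X,\bR)$ with $\|\phi_n-\phi_0\|_\infty\to 0$. I would first observe that, because $\nu\mapsto\nu(\phi_n)$ is continuous and $\nu\mapsto h_\nu(f)$ is upper semi-continuous on the compact space $\cM$, the set $\cM_{\max}(\phi_n)$ is a nonempty compact subset of $\cM$, and the supremum in the definition of $h_{\infty,\phi_n}$ is actually attained. Thus, for each $n$, I can choose $\mu_n\in\cM_{\max}(\phi_n)$ with $h_{\mu_n}(f)=h_{\infty,\phi_n}$. By weak$^\ast$ compactness of $\cM$, after passing to a subsequence I may assume $\mu_n\to\mu$ for some $\mu\in\cM$, and it suffices to show $\mu\in\cM_{\max}(\phi_0)$: then upper semi-continuity of entropy gives
\[
\limsup_{n\to\infty} h_{\infty,\phi_n}=\limsup_{n\to\infty} h_{\mu_n}(f)\leq h_\mu(f)\leq h_{\infty,\phi_0}=0,
\]
which is what is needed.

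To verify $\mu\in\cM_{\max}(\phi_0)$, I would use two simple ingredients. First, the map $\phi\mapsto b_\phi$ is $1$-Lipschitz with respect to $\|\cdot\|_\infty$: since $b_\phi=\max_{\nu\in\cM}\nu(\phi)$ and $|\nu(\phi)-\nu(\psi)|\leq\|\phi-\psi\|_\infty$ uniformly in $\nu$, we get $|b_{\phi_n}-b_{\phi_0}|\leq\|\phi_n-\phi_0\|_\infty\to 0$. Second, by the triangle inequality,
\[
|\mu_n(\phi_n)-\mu(\phi_0)|\leq\|\phi_n-\phi_0\|_\infty+|\mu_n(\phi_0)-\mu(\phi_0)|,
\]
and both summands tend to $0$ (the first from the hypothesis, the second from weak$^\ast$ convergence applied to the fixed continuous function $\phi_0$). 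Combining these, $\mu(\phi_0)=\lim_n\mu_n(\phi_n)=\lim_n b_{\phi_n}=b_{\phi_0}$, so indeed $\mu\in\cM_{\max}(\phi_0)$.

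There is no serious obstacle; the argument is a standard compactness-plus-semi-continuity routine. The only point requiring care is the slight mismatch that $\mu_n$ maximizes against $\phi_n$ (not $\phi_0$), which is why the uniform estimate $|\mu(\phi)-\mu(\psi)|\leq\|\phi-\psi\|_\infty$ is essential: it is what allows the moving target $b_{\phi_n}$ and the moving integrand $\mu_n(\phi_n)$ to both converge to the same limit $b_{\phi_0}=\mu(\phi_0)$. Note that this approach gives upper semi-continuity of $\phi\mapsto h_{\infty,\phi}$ at every $\phi_0$, a fact that will also fit neatly with the upper semi-computability claim in Theorem A.
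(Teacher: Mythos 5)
Your proof is correct, and it takes a genuinely different route from the paper's. The paper's argument is routed through the localized entropy function $\cH=\cH_{\phi_0}$: it uses that $\cH$ is continuous (concave + upper semi-continuous on a compact interval), that $\cH_{\phi_0}(b_{\phi_0})=h_{\infty,\phi_0}=0$, and that for $\mu\in\cM_{\max}(\phi)$ with $\|\phi-\phi_0\|_\infty<\delta$ one has $|\mu(\phi_0)-b_{\phi_0}|<2\delta$, so $h_\mu(f)\leq\cH_{\phi_0}(\mu(\phi_0))<\epsilon$. You instead work directly with sequential compactness of $\cM$ and upper semi-continuity of $\nu\mapsto h_\nu(f)$: attain the sup by a maximizer $\mu_n\in\cM_{\max}(\phi_n)$, extract a convergent subsequence, and show the limit lies in $\cM_{\max}(\phi_0)$ via the $1$-Lipschitz estimate $|b_{\phi_n}-b_{\phi_0}|\leq\|\phi_n-\phi_0\|_\infty$ and the triangle inequality $|\mu_n(\phi_n)-\mu(\phi_0)|\leq\|\phi_n-\phi_0\|_\infty+|\mu_n(\phi_0)-\mu(\phi_0)|$. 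Both arguments ultimately rest on the same ingredient (upper semi-continuity of the entropy map on $\cM$), but yours bypasses $\cH$ entirely, and it has the benefit of explicitly establishing that $\phi\mapsto h_{\infty,\phi}$ is upper semi-continuous at \emph{every} $\phi_0\in C(X,\bR)$, not merely continuous where $h_{\infty,\phi_0}=0$; this global upper semi-continuity is the qualitative shadow of the upper semi-computability claim in Theorem A, so the connection you note at the end is apt. One small point of hygiene: in the extraction step you should first pass to a subsequence realizing $\limsup_n h_{\infty,\phi_n}$ and only then extract a weak$^\ast$-convergent sub-subsequence of the $\mu_n$; your phrasing glosses over this, but it is a standard and easily repaired omission.
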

\begin{proof}
We recall that for all $\phi\in C(X,\bR)$, $\cH(b_\phi)=h_{\infty,\phi}$.  Fix $\epsilon>0$; by the continuity of $\cH$, when $\mu(\phi_0)$ is sufficiently close to $b_\phi$, then $h_{\mu}(f)\leq\cH(\mu(\phi_0))<\epsilon$.  Moreover, if $\|\phi-\phi_0\|_\infty<\delta$, then, for all $\mu\in\cM$, $|\mu(\phi)-\mu(\phi_0)|<\delta$, and, in particular, $|b_\phi-b_{\phi_0}|<\delta$.  Hence, if $\mu(\phi)=b_\phi$, then $|\mu(\phi_0)-b_{\phi_0}|<2\delta$.  Therefore, if $\delta$ is sufficiently small, then $h_{\mu}(f)<\epsilon$.  Then, by the definition of $\cH$, it follows that $h_{\infty,\phi}<\epsilon$, and the result follows.
\end{proof}

We show in Proposition \ref{prop0acomp} that the converse to Lemma \ref{lem2a} holds, i.e., that $\phi\mapsto h_{\infty,\phi}$ is continuous at $\phi_0$ if and only if $h_{\infty,\phi_0}=0$.

 \subsection{Locally constant potentials}

Let $f:X\to X$ be a transitive SFT over the alphabet $\cA$ with transition matrix $A$. For $\phi\in C(X,\bR)$ and  $k\in\bN$, we define $\var_k(\phi)=\sup\{|\phi(x)-\phi(y)|: x_1=y_1,\cdots, x_k=y_k\}.$
We say $\phi$ is constant on cylinders of length $k$ if $\var_k(\phi)=0$. We observe that $\phi$ is locally constant if and only if $\phi$ is constant on cylinders of length $k$ for some $k\in\bN$. We denote the set of all $\phi$ that are constant on cylinders of length $k$ by $LC_k(X,\bR)$.  For the remainder of this section, we let $\phi\in LC_k(X,\bR)$.
The next result shows that the case with $k\geq1$ can be reduced to the case $k=1$.

\begin{proposition}\label{propone}
 Let $k\in \bN$, $\phi\in LC_k(X,\bR)$, and  $d'=m_c(k)$. There exists a SFT $g:Y\to Y$ and homeomorphism $h:X\to Y$ with the following properties:
 \begin{enumerate}[(a)]
 \item The subshift $g$ has alphabet $\cA'=\{0,\dots,d'-1\}$ and transition matrix $A'$ such that $A'$ has at most $d$ non-zero entries in each row,
 \item The function $h$ conjugates $f$ and $g$, 
 \item The potential $\phi'=\phi\circ h^{-1}:Y\to \bR$  is constant on cylinders of length one, and
 \item $I(\phi')=I(\phi)$ and $\cH_{\phi'}=\cH_\phi$.
 \end{enumerate}
\end{proposition}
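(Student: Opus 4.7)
The plan is to apply the classical $k$-block (higher-block) recoding of the SFT $X$. Let
\[
\cA' \;=\; \{w \in \cA^k : w \text{ is admissible in } X\},
\]
which has cardinality $m_c(k) = d'$ by definition, and identify $\cA'$ with $\{0, \dots, d'-1\}$. For $w = a_1 \cdots a_k$ and $w' = a_1' \cdots a_k'$ in $\cA'$, declare $A'_{w,w'} = 1$ iff $a_{i+1} = a_i'$ for $i = 1, \dots, k-1$ and $A_{a_k, a_k'} = 1$. Since $a_k'$ then ranges over at most $d$ choices, each row of $A'$ has at most $d$ non-zero entries; taking $Y = X_{A'}$ and letting $g$ be the shift on $Y$ establishes (a).

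For (b), define $h \colon X \to Y$ by the sliding-window formula
\[
h(x)_n \;=\; x_n x_{n+1} \cdots x_{n+k-1}.
\]
Admissibility of each $h(x)_n$ in $\cA'$, and $A'$-compatibility of consecutive blocks, are both immediate from the construction, so $h(X) \subseteq Y$. Conversely, any $y \in Y$ gives rise to $x \in X$ defined by letting $x_n$ be the first letter of $y_n$; the $A'$-compatibility of $y$ forces the resulting $x$ to be $A$-admissible and to satisfy $h(x) = y$. Continuity of $h$ and $h^{-1}$ is immediate from the cylinder topologies, and the identity $h \circ f = g \circ h$ follows directly from the sliding-window formula.

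Parts (c) and (d) then follow essentially for free. Since $\phi \in LC_k(X, \bR)$ depends only on $(x_1, \dots, x_k)$, and this is precisely the first coordinate $h(x)_1 \in \cA'$, the potential $\phi' = \phi \circ h^{-1}$ is constant on cylinders of length one in $Y$, giving (c). For (d), the push-forward $\mu \mapsto h_* \mu$ is an affine homeomorphism between $\cM$ and the space of $g$-invariant Borel probability measures on $Y$. Topological conjugacies preserve measure-theoretic entropy, and $(h_* \mu)(\phi') = \mu(\phi)$ by change of variables. Consequently $I(\phi') = I(\phi)$, and for each $w$ in this common interval the suprema defining $\cH_\phi(w)$ and $\cH_{\phi'}(w)$ are taken over sets of entropies in bijection by $\mu \mapsto h_*\mu$, so $\cH_{\phi'} = \cH_\phi$.

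The construction is largely bookkeeping, and I do not anticipate a substantive obstacle. The one point requiring mild care is verifying that $h(X) = Y$ rather than a proper subset, so that the conjugacy is genuine; this is a direct consequence of the chosen definition of $A'$, together with the observation that no allowed $A$-transition is discarded in the recoding.
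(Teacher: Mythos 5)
Your proof is correct and follows the same higher-block recoding that the paper uses: the alphabet $\cA'$ of admissible $k$-blocks (equivalently, length-$k$ cylinders), the overlap-determined transition matrix $A'$, and the sliding-window conjugacy $h(x)_n = x_n\cdots x_{n+k-1}$. The paper records only these definitions and cites the reference [BSW] for the standard verification, which you carry out explicitly.
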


This result is  standard  and can be found in \cite{BSW}, for example.  In this paper, we only use the definitions of the subshift $g$ and the conjugating map $h$.  Let $\{\cC_k(0),\dots,\cC_k({m_c(k)-1})\}$ denote the set of cylinders of length $k$ in $X$, which we identify with $\cA'=\{0,\dots, d'-1\}$.  The transition matrix $A'$  is defined as follows: $a'_{i,j}=1$ if and only if there exists $x\in X$ with $\cC_k(x)=i$ and $\cC_k(f(x))=j$.   Finally, for $x\in X$, we define $h(x)=y=(y_n)_{n=1}^\infty$ by $y_n=\cC_k(f^{n-1}(x))$.

We observe that since the periodic point measures are dense in $\cM$, see, e.g., \cite{Par}, $I(\phi)$ can be written in terms of the periodic points, i.e., 
\begin{equation}
I(\phi)=\overline{\conv\left\{\mu_x(\phi): x\in \Per(f)\right\}}\label{perconv},
\end{equation}
where $\mu_x$ is evaluated using Formula (\ref{muxphi}) for $x\in\Per_n(f)$.
We observe that by Proposition \ref{propone}, for $x\in\Per_n(f)$, $h(x)\in \Per_n(g)$ and $\mu_x(\phi)=\mu_{h(x)}(\phi')$.  We observe that the $k$-elementary periodic points of $X$ correspond to elementary periodic points in $Y$.  Therefore, either $h(x)$ is an elementary periodic point or the generating sequence of (an iterate of) $h(x)$ can be written as a concatenation of the generating sequences of two periodic points $z_1$ and $z_2$, see \cite{Je} for details.  It follows that
$
\mu_{h(x)}(\phi')
$
is a convex combination of 
$
\mu_{z_i}(\phi')
$
for $i=1,2$ where the coefficients are the relative lengths of $z_1$ and $z_2$.  By induction, we find that $\mu_{h(x)}(\phi')$ is a convex combination of $\phi'$-integrals of $g$-elementary periodic point measures.  Since these elementary periodic points correspond to $k$-elementary periodic points of $X$, it follows from Equation \refeq{perconv}, that  
\begin{equation}\label{eperconv}
I(\phi)=\conv\left\{\mu_x(\phi): x\in \EPer^k(f)\right\}.
\end{equation}
We note that the closure is not needed in Equation \eqref{eperconv} since $\EPer^k(f)$ is finite; therefore, the closure can also be omitted in Equation \eqref{perconv}.

Next, we characterize the decomposition of $LC_k(X,\bR)=\cO_k\,\dot\cup\,\cU_k\,\dot\cup\,\cV_k$ from Equation (\ref{eq:decompositionk}) in terms of the number and behavior of the elementary periodic points which achieve the maximum value in $I(\phi)$.  We define
$$
\EPer_{\rm max}^k(\phi)=\left\{x\in \EPer^k(f): \mu_x(\phi)=b_\phi\right\}.
$$
\begin{definition}
Let $f:X\to X$ be a transitive SFT.
\begin{enumerate}[(a)]
\item Then $\phi\in\cO_k$ if $\EPer_{\rm max}^k(\phi)$ contains a single $k$-elementary periodic orbit. 
\item Furthermore, $\phi\in\cU_k$ if $\EPer_{\rm max}^k(\phi)=\{x^1,\dots,x^\ell\}$ contains more than one $k$-elementary periodic orbit and the $k$-cylinder support of different $k$-elementary periodic orbits are distinct.  In other words, $\cC_k(x^i)\not=\cC_k(x^j)$ for all $x^i,x^j\in \EPer_{\rm max}^k(\phi)$ with $i\not=j$.
\item Finally,  $\phi\in\cV_k$ if $\EPer_{\rm max}^k(\phi)=\{x^1,\dots,x^\ell\}$ contains more than one $k$-elementary periodic orbit and the $k$-cylinder support of different  $k$-elementary  periodic orbits are not distinct.  More precisely, $\cC_k(x^i)=\cC_k(x^j)$ for some $i,j\in\{1,\dots,\ell\}$ with $i\not=j$.
\end{enumerate}
\end{definition}
As mentioned in Section 1.2, the properties of the partition  $LC_k(X,\bR)=\cO_k\,\dot\cup\,\cU_k\,\dot\cup\,\cV_k$
follow from \cite{WY}. Furthermore, \cite{WY} implies  the following:
\begin{proposition}Let $f:X\to X$ be a transitive SFT.  Then
\begin{enumerate}[(a)]
\item If $\phi\in \cO_k$ and $x\in\EPer_{\rm max}^k(\phi)$, then $\mu_{\infty,\phi}=\mu_x$ and $h_{\infty,\phi}=0$;
\item If $\phi\in \cU_k$, then $h_{\infty,\phi}=0$ and $\mu_{\infty,\phi}$ is a convex combination of the periodic point measures corresponding to periodic orbits in $\EPer_{\rm max}^k(\phi)$; and
\item If $\phi\in \cV_k$, then $\mu_{\infty,\phi}$ is a measure of maximal entropy of a non-discrete (and not necessarily transitive) SFT $X_{\rm max}\subset X$ and $h_{\infty,\phi}>0$.  
\end{enumerate}
\end{proposition}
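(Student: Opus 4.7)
I will begin by applying Proposition \ref{propone} to reduce to the case $k=1$. The homeomorphism $h:X\to Y$ sends $k$-elementary periodic points of $f$ to elementary periodic points of the conjugated SFT $g$, preserves $I(\phi)$ and $\cH_\phi$, and pushes forward equilibrium states $\mu_{\beta\phi}$ to equilibrium states $\mu_{\beta\phi'}$ of the locally constant potential $\phi'=\phi\circ h^{-1}$. Two $k$-elementary orbits share a $k$-cylinder of $X$ if and only if the corresponding elementary orbits of $Y$ share a letter, so the partition $\cO_k\,\dot\cup\,\cU_k\,\dot\cup\,\cV_k$ transfers to the analogous $k=1$ decomposition for $\phi'$. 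Henceforth assume $\phi\in LC_1(X,\bR)$, so that $\phi(x)=\phi(x_1)$ and $\mu(\phi)=\sum_a\phi(a)\mu([a])$ is linear in the letter frequencies. Let $X_{\max}$ denote the union of the supports of all $\mu\in\cM_{\max}(\phi)$; standard arguments from ergodic optimization for locally constant potentials (compare \cite{WY}) show that $X_{\max}$ is an SFT, that every $f$-invariant measure supported on $X_{\max}$ is maximizing, and hence that $\EPer_{\max}^1(\phi)$ is exactly the set of elementary periodic orbits contained in $X_{\max}$.

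For case (a), the hypothesis $\EPer_{\max}^1(\phi)=\{x\}$ together with the SFT structure of $X_{\max}$ forces $X_{\max}$ to coincide with the orbit of $x$: any additional ergodic invariant measure would itself be supported on a periodic orbit within the SFT and would produce an additional maximizing elementary periodic orbit. Thus $\cM_{\max}(\phi)=\{\mu_x\}$, and by Lemma \ref{lem1}(a) together with compactness every weak$^\ast$-accumulation point of $(\mu_{\beta\phi})$ lies in $\cM_{\max}(\phi)$, giving $\mu_{\infty,\phi}=\mu_x$ and $h_{\infty,\phi}=h_{\mu_x}(f)=0$. For case (b), the disjointness of the $1$-cylinder supports rules out, via an $f$-invariance balance argument, any admissible transition supporting an invariant measure between the letter sets of distinct elementary orbits $x^1,\dots,x^\ell$ (any such transition would, when iterated by invariance, force an additional maximizing elementary orbit sharing a letter, placing $\phi$ into case (c)). Hence $X_{\max}$ is the disjoint union of the orbits of $x^1,\dots,x^\ell$ and $\cM_{\max}(\phi)$ is the finite-dimensional simplex spanned by $\mu_{x^1},\dots,\mu_{x^\ell}$, all of zero entropy. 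Br\'emont's existence theorem for $\mu_{\infty,\phi}$ then identifies it as a convex combination of the $\mu_{x^i}$, and $h_{\infty,\phi}=0$.

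For case (c), a shared letter between two distinct maximizing elementary orbits is a branching vertex inside $X_{\max}$, so $X_{\max}$ contains uncountably many admissible orbits obtained by splicing $x^i$ and $x^j$ at the shared letter, and any SFT with a branching vertex is non-discrete and has $\htop(f|_{X_{\max}})>0$. The subtler step, which I view as the principal obstacle, is to identify $\mu_{\infty,\phi}$ with the measure of maximal entropy of $X_{\max}$ rather than with some other maximizing measure of positive entropy. For this I would exploit that, for locally constant $\phi$ on an SFT, the equilibrium states $\mu_{\beta\phi}$ are Gibbs with a uniform Bowen constant depending only on $\var_1(\phi)$, together with the following quantitative concentration estimate: any admissible orbit that leaves $X_{\max}$ for even one block loses a uniform amount $\delta>0$ of potential per unit time relative to $b_\phi$, so its Gibbs weight decays like $e^{-\beta\delta}$ as $\beta\to\infty$. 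This forces $\mu_{\beta\phi}$ to concentrate on $X_{\max}$; within $X_{\max}$, where every invariant measure has $\phi$-integral equal to $b_\phi$, the Gibbs weight is governed purely by entropy, so the limit is the measure of maximal entropy on $X_{\max}$ (on the dominant irreducible components), yielding $h_{\infty,\phi}=\htop(f|_{X_{\max}})>0$.
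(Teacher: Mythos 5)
The paper does not prove this proposition; it explicitly defers to the literature (``\cite{WY} implies the following''), so there is no in-paper argument to compare against. Judged on its own, your outline recovers the standard landmarks --- reduce to $k=1$ via Proposition \ref{propone}, build the maximizing subshift $X_{\rm max}$ (the Mather set) via a sub-action, note that it is an SFT on which every invariant measure is maximizing, and classify its structure --- but there are genuine gaps.

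In case (a), the assertion that ``any additional ergodic invariant measure would itself be supported on a periodic orbit within the SFT'' is false; ergodic measures on SFTs of positive entropy are not periodic point measures. The correct mechanism is different: a nonempty SFT always contains a periodic orbit, whose $k$-cylinder itinerary decomposes into $k$-elementary loops, so a larger $X_{\rm max}$ would force additional points of $\EPer_{\rm max}^k(\phi)$. Case (b)'s ``invariance balance argument'' is asserted rather than carried out; one must also rule out one-way transitions between the letter sets, not just closed excursions. The most serious gap is in case (c). The claim that $\mu_{\beta\phi}$ is Gibbs ``with a uniform Bowen constant depending only on $\var_1(\phi)$'' is misleading: the natural Gibbs constant for $\beta\phi$ grows with $\beta$, and $\mu_{\beta\phi}$ is fully supported on $X$, not on $X_{\rm max}$. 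Identifying the $\beta\to\infty$ limit as a measure of maximal entropy of $X_{\rm max}$ --- and in particular pinning down how the mass distributes among its (possibly several, non-transitive) irreducible components --- is precisely the nontrivial content of Br\'emont's theorem \cite{Br} and its dynamical reworkings \cite{CGU,L}. Your two-sentence concentration heuristic does not substitute for that analysis; as written, it essentially invokes the conclusion you are trying to prove.
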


\subsection{Basics from computability theory}\label{sec:compute:basic}

Computability theory provides information about the feasibility and accuracy of computational experiments when using approximate data.  For instance, a computable function is one in which the results include explicit error bounds on the accuracy of the value of the function.  Without such an accuracy guarantee, a computer experiment might miss or misinterpret interesting behaviors.  In this section, we discuss the basic ideas from computability theory which are needed in this paper.

We are interested in the feasibility of computational experiments on $I(\phi)$, $\mu_{\infty,\phi}$, and $h_{\infty,\phi}$.  To this end, we focus only on the definitions that are needed for these particular objects.  For a more thorough discussion of topics from computability theory, see, e.g., \cite{BBRY,BHW, BY,BSW,GHR,RW,RYSurvey,K} and the references therein.  We use different, but closely related, definitions to those in \cite{BY} and \cite{GHR}, see also \cite{BSW}.  Throughout this discussion, we use a bit-based computation model, such as a Turing machine, as opposed to a real RAM (random access machine) model \cite{S} (where these questions are trivial).  One can think of the set of Turing machines as a particular countable set of functions.  We denote the output of a Turing machine $\psi$ on input $x$ by $\psi(x)$.
\begin{definition}[cf {\cite[Definition 1.2.1]{BY}}]
Let $x\in\bR^m$.  An {\em oracle} approximating $x$ is a function $\psi$ such that on input $n\in \bN$, $\psi(n)\in\bQ^m$ with $\|\psi(n)-x\|<2^{-n}$.  Moreover, $x$ is {\em computable} if there is a Turing machine $\psi$ which is an oracle for $x$.
\end{definition}

Since there are only countably many Turing machines, there are only countably many computable points in $\bR^m$.  The computable points in $\bR$ include the rational and algebraic numbers as well as some transcendental numbers, such as $e$ and $\pi$.  Since points in $\bR^{m_c(k)}$ are in bijective correspondence with locally constant potentials $LC_k(X,\bR)$, the definition of computability also carries over to these potentials.  

The main results in this paper are about the computability of certain functions.  We now provide the definition of a computable function.
\begin{definition}
Let $S\subset\bR^m$.  A function $g:S\rightarrow\bR$ is {\em computable} if there is a Turing machine $\chi$ so that for any $x\in S$ and any oracle $\psi$ for $x$, $\chi(\psi,n)$ is a rational number so that $|\chi(\psi,n)-g(x)|<2^{-n}$.
\end{definition}

We observe that, in this definition, $x$ does not need to be computable, i.e., the oracle $\psi$ does not need to be a Turing machine.  When $x$ is computable, then $g(x)$ is also computable since $\chi(\psi,\cdot)$ is an oracle Turing machine for $g(x)$.  Additionally, we observe that computability of a function is defined in terms of the supremum norm.  Since the supremum norm does not generate the same topology as the H\"older or Lipschitz norms, previous results on the H\"older and Lipschitz norms cannot be applied to this paper, see, e.g., \cite{C,CLT,M,QS}

The composition of computable functions is computable because the output of one Turing machine can be used as the input approximation for subsequent machines.  In addition, basic operations, such as the arithmetic operations and the minimum and maximum functions are computable, see \cite{BHW} for more details on these, and related properties.

We also note that the definition of a computable function uses any oracle for $x$ and applies even when $x$ is not computable.  Therefore, we can conclude that for any sufficiently close approximation $y$ to $x$, $g(y)$ approximates the value of $g(x)$, i.e., $g$ is continuous.  We make this property explicit in the following lemma:

\begin{lemma}[cf {\cite[Theorem 1.5]{BY}}]
Let $S\subset \bR^m$ and $g:S\rightarrow\bR$.  If $g$ is computable, then $g$ is continuous.
\end{lemma}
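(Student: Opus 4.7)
The plan is to argue directly from the definition of a computable function, exploiting the fundamental fact that any halting Turing machine makes only finitely many oracle queries during its computation. Fix $x_0 \in S$ and $\epsilon > 0$, and let $\chi$ be the Turing machine witnessing computability of $g$. I would choose $n \in \bN$ with $2^{-n+1} < \epsilon$, and then exhibit a particular oracle $\psi_{x_0}$ for $x_0$ that is slightly ``stricter'' than required: by density of $\bQ^m$ in $\bR^m$, I can pick $\psi_{x_0}(m) \in \bQ^m$ with $\|\psi_{x_0}(m) - x_0\| < 2^{-m-1}$ for all $m$, which is still a valid oracle.

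Next I would inspect the finite computation $\chi(\psi_{x_0}, n)$. Since it halts, it queries $\psi_{x_0}$ at only finitely many precisions $m_1, \ldots, m_k$; set $M = \max_i m_i$. The key technical claim is that for any $y \in S$ with $\|y - x_0\| < 2^{-M-1}$, the function $\psi_{x_0}$ restricted to inputs $m \leq M$ extends to a valid oracle $\psi_y$ for $y$. Indeed, for $m \leq M$,
\[
\|\psi_{x_0}(m) - y\| \leq \|\psi_{x_0}(m) - x_0\| + \|x_0 - y\| < 2^{-m-1} + 2^{-M-1} \leq 2^{-m},
\]
so defining $\psi_y(m) = \psi_{x_0}(m)$ for $m \leq M$ and choosing any rational within $2^{-m}$ of $y$ for $m > M$ produces an oracle for $y$.

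Because the executions of $\chi$ on inputs $(\psi_{x_0}, n)$ and $(\psi_y, n)$ receive identical responses to every query, the two computations are bit-for-bit identical, so $\chi(\psi_y, n) = \chi(\psi_{x_0}, n)$. Applying the computability guarantee at both $x_0$ and $y$ and the triangle inequality,
\[
|g(y) - g(x_0)| \leq |g(y) - \chi(\psi_y, n)| + |\chi(\psi_y, n) - \chi(\psi_{x_0}, n)| + |\chi(\psi_{x_0}, n) - g(x_0)| < 2^{-n} + 0 + 2^{-n} < \epsilon.
\]
Thus $\delta = 2^{-M-1}$ witnesses continuity at $x_0$.

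The only real obstacle is conceptual rather than technical: one must be comfortable with the idea that although $\chi$ may adaptively choose which precision to query next based on the oracle's answers, the resulting \emph{trace} of queries and responses on a fixed oracle is finite and deterministic. Once that is recognized, the argument reduces to the transparent ``glue together a new oracle'' construction above. A minor sanity check is that $\delta$ depends on $x_0$ (through the choice of $\psi_{x_0}$ and the resulting $M$), so this proof establishes continuity at each point of $S$, not uniform continuity — which is exactly what the statement requires.
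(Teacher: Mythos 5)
Your proof is correct, and it is the standard argument (the ``use principle'' or ``finite use'' argument) that underlies the cited result. The paper itself does not reproduce a proof --- it simply refers to {[}BY, Theorem~1.5{]} --- so you have essentially reconstructed the canonical argument from the literature. The two load-bearing ideas are both present and correctly handled: (i) a halting computation consults the oracle only finitely many times, giving a finite precision bound $M$; and (ii) a deterministic machine fed two oracles that agree on every query it actually makes produces identical outputs, so nearby points (within $2^{-M-1}$) are indistinguishable to $\chi$. Your ``strict'' oracle $\psi_{x_0}(m)$ with accuracy $2^{-m-1}$ is exactly the right device to guarantee that the glued-together $\psi_y$ is still a legitimate oracle for $y$: the triangle inequality $2^{-m-1}+2^{-M-1}\le 2^{-m}$ for $m\le M$ is the whole point of tightening to $2^{-m-1}$, and you carry it out correctly. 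One point worth making explicit for a reader: the inductive reason the two computations ``are bit-for-bit identical'' is that determinism plus agreement on the first query response forces the second query to be the same, and so on; and because $\chi(\psi_{x_0},n)$ queries only precisions $\le M$, the run on $\psi_y$ never escapes the region where the oracles agree. You gesture at this (``trace of queries and responses... finite and deterministic''), which is adequate, though a fully formal writeup might spell out the induction. The closing sanity check --- that this gives pointwise continuity, not uniform continuity, since $M$ depends on $x_0$ --- is a good observation and is exactly right. A cosmetic note: you reuse $m$ both as the oracle-query index and as the ambient dimension of $\bR^m$; in context this is harmless, but it is worth renaming one of them.
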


In this paper, we include functions which have a weaker notion of computability called upper semi-computability.  In this case, the convergence and accuracy of the approximations are weaker.

\begin{definition}[cf {\cite[Definition 2.7]{GHR}}]
Let $S\subset\bR^m$.  A function $g:S\rightarrow\bR$ is {\em upper semi-computable} (also called right recursively enumerable or right computable) if there is a Turing machine $\chi$ so that for any $n\in \bN$ and $x\in S$, and any oracle $\psi$ for $x$, $\chi(\psi,n)$ is a rational number with the following properties: The sequence $(\chi(\psi,n))_n$ is nonincreasing and $\lim_{n\rightarrow\infty}\chi(\psi,n)=g(x)$.
\end{definition}

Computable functions are also semi-computable, but there are functions which are semi-computable, but not computable, see, e.g., \cite{BY}.  The main distinction between computable and semi-computable functions is an error estimate.  In fact, a semi-computable function with an error estimate is computable.  By mirroring the definition above, we may define lower semi-computable functions.  We observe that a function is computable if and only if it is both upper- and lower-semi computable.  

In this paper, we frequently consider functions $g:S\rightarrow\bR$ which are not computable.  The non-computability of a function is based on its entire domain.  In other words, $g$ may have some computability properties at some points of its domain (but not all).  For the purposes of this paper, we introduce the notion of a function which computable at a point.  This notion of computability is a local property and is a computable version of continuity.

\begin{definition}
Let $S\subset\bR^m$ be an open set and let $x\in S$.  A function $g:S\rightarrow\bR$ is {\em computable at $x$} if there exists a Turing machine $\chi$ so that for any oracle $\psi$ for $x$, $\chi(\psi,n)$ is a rational number with the following property: Let $\ell_n$ be the highest precision to which the oracle $\psi$ is queried within $\chi$, then, for all $y\in S$ such that there exists an oracle $\psi'$ for $y$ that agrees with $\psi$ up to precision $\ell_n$, i.e., $\chi(\psi,n)=\chi(\psi',n)$, we have $|\chi(\psi',n)-g(y)|<2^{-n}$.
\end{definition}

Roughly speaking, this definition states that if $y$ is close enough to $x$ so that, up to precision $\ell_n$, $\psi$ could be an oracle for $y$, then $\chi(\psi,n)$ is a good approximation for $g(y)$.  This definition is a computable version of continuity as the oracle condition is similar to a $\delta$-neighborhood of $x$.  We observe that this definition does not include a decidability statement, e.g., this definition is existential.  In other words, we do not assume that there exists a Turing machine decides whether $x\in S$ is a computable point.  Additionally, we note that if $T\subset S$ and $g|_T$ is computable, then for every $x$ in the interior of $T$, $g$ is computable at $x$; thus, this condition is necessary for all points in the domain of a computable function on an open set.  We note that there are other potential notions of computability at a point that capture other computability properties of $g$, but these other notions may not be as closely related to the results in this paper.  We leave the details to the interested reader.

Finally, since one of our main theorems involves recursively open sets, we include the definition of a recursively open set.
\begin{definition}[{cf \cite[Definition 2.4]{GHR}}]
Let $S\subset\bR^m$ be an open set.  $S$ is a {\em recursively open set} (also called a semi-decidable set or a lower-computable set) if there exists a Turing machine $\psi$ such that $\psi$ produces a (possibly infinite) sequence of pairs $(z_i,n_i)$ so that $z_i\in\bQ^m$ is a rational vector and $n_i\in\bZ$ so that
$$
S=\bigcup_i B\left(z_i,2^{-n_i}\right).
$$
\end{definition}
A recursively open set is one for which there exists a Turing machine that terminates on input $s$ if $s\in S$ and does not terminate if $s\not\in S$.  Therefore, we observe that we cannot decide, using such a Turing machine, if $s\not\in S$ as it is impossible to decide if the Turing machine will run forever or has not run long enough.

\subsection{Computability theory for SFTs}

Since our main results pertain to SFTs, in this section, we specialize computability theory to this case.  For further details on computability for SFTs, see, e.g., \cite{BSW}.  Throughout this section, we assume that $X$ is a SFT.  We begin by adapting the definition of a computable point to SFTs.

\begin{definition}
Let $x\in X$.  An {\em oracle} for $x$ is a function $\psi$ such that for any natural number $n$, $\psi(n)=x_n$.  Moreover, $x$ is {\em computable} if there is a Turing machine $\psi$ which is an oracle for $x$.
\end{definition}

We note that all periodic and preperiodic points of $X$ are computable.  In fact, there is a Turing machine that produces (over an infinite amount of time) a list of all preperiodic points of $X$ since a preperiodic point corresponds to a pair of finite sequences: the prefix and the periodic part.  Thus, there exists a Turing machine that lists all pairs of finite sequences in the alphabet of $X$ and checks each sequence against the allowable transitions for $X$.

We also extend the notion of computable functions to the case of SFTs in the following definition:

\begin{definition}
Let $S\subset X$.  A function $g:S\rightarrow\bR$ is {\em computable} if there exists a Turing machine $\chi$ such that for any $x\in S$ and oracle $\psi$ for $x$, $\chi(\psi,n)$ is a rational number so that $|\chi(\psi,n)-g(x)|<2^{-n}$.  In this paper, we also consider functions whose domains are subsets  of $C(X,\bR)$.  In particular, for $S\subset C(X,\bR)$, we say $h:S\rightarrow\bR$ is computable if there is a Turing machine $\eta$ so that for any function $\phi\in S$ and oracle $\chi$ for $\phi$, $\eta(\chi,n)$ is a rational number with $|\eta(\chi,n)-h(\phi)|<2^{-n}$.
\end{definition}

The notion of an upper semi-computable function carries over similarly and we leave the details of the formulation to the interested reader.  We note that if $\theta$ from the Tychnov product topology is a computable real number, then the function for the distance between two points of $X$ is a computable function and $X$ is a computable metric space, cf {\cite[Definition 2.2]{GHR}}.

We recall that we may identify $LC_k(X,\bR)$ with $\bR^{m_c(k)}$.  The following result is a standard tool when dealing with the computability of potentials for SFTs, see, e.g., \cite{BSW}:
\begin{lemma}\label{lemabLC}
There exists a Turing machine, which, given input $n\in\bN$ and an oracle $\chi$ of $\phi\in C(X,\bR)$, produces $m_n\in\bN$ and $\phi_n\in LC_{m_n}(X,\bQ)$ such that $\|\phi-\phi_n\|_\infty<2^{-n}$.
\end{lemma}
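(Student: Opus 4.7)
The plan is to exploit uniform continuity of $\phi$---which holds since $X$ is compact---to reduce the approximation of $\phi$ to finitely many point evaluations, one per admissible cylinder of a sufficiently long level $m_n$. A standard oracle $\chi$ for a continuous function on a computable compact metric space both evaluates $\phi$ at computable points to arbitrary precision and supplies (or allows the Turing machine to extract) a modulus of uniform continuity; I would use the former to produce the rational values of $\phi_n$ and the latter to choose $m_n$.

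Concretely, given input $n$, I would first use the modulus of continuity of $\phi$ to determine an integer $m_n$ such that $\var_{m_n}(\phi) < 2^{-n-1}$; since the metric $d_\theta$ from \refeq{defmet} satisfies $d_\theta(x,y) \to 0$ as the length of the common prefix of $x$ and $y$ grows, this is just a matter of choosing $m_n$ large enough. Next, I would enumerate the (finitely many) admissible cylinders $\cC_{m_n}(\tau)$ of length $m_n$, which is effective because the finite transition matrix $A$ of the SFT is available to the Turing machine. For each such $\tau$, I would compute a representative point $x_\tau \in \cC_{m_n}(\tau)$; by the transitivity of $f$, the segment $\tau$ can be effectively extended to an admissible periodic sequence, giving a computable $x_\tau$. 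I would then query the oracle $\chi$ to obtain a rational $q_\tau$ with $|\phi(x_\tau) - q_\tau| < 2^{-n-1}$, and declare $\phi_n \equiv q_\tau$ on $\cC_{m_n}(\tau)$. For any $y \in \cC_{m_n}(\tau)$, the triangle inequality yields
\[
|\phi(y) - \phi_n(y)| \leq |\phi(y)-\phi(x_\tau)| + |\phi(x_\tau)-q_\tau| < \var_{m_n}(\phi) + 2^{-n-1} < 2^{-n},
\]
so $\|\phi-\phi_n\|_\infty < 2^{-n}$, as required. By construction $\phi_n \in LC_{m_n}(X,\bQ)$.

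The main obstacle is making precise the claim that the oracle $\chi$ supplies an effective modulus of uniform continuity. If this is built into the definition of an oracle for a continuous function on a computable compact metric space (as is common in computable analysis), the lemma is essentially immediate. Otherwise, the modulus must be extracted from the pointwise oracle: one enumerates cylinders of increasing length $m=1,2,\dots$, queries $\phi$ at a representative of each cylinder with high enough precision, and halts when the largest computed spread across cylinders of length $m$ drops below $2^{-n-1}$ (with an appropriate slack to account for evaluation error); compactness of $X$ together with uniform continuity of $\phi$ guarantees that this search terminates. Either framing yields the lemma, which is why \cite{BSW} treats it as a standard tool.
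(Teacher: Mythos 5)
Your framing (a) is the right one and matches what the paper intends: the paper doesn't prove this lemma but cites it as standard from \cite{BSW}, and in computable analysis an oracle (name) for $\phi \in C(X,\bR)$ on a computable compact metric space is taken to carry an effective modulus of uniform continuity (equivalently, a Cauchy name given by locally constant rational approximants with known convergence rate). Under that convention, the rest of your argument is sound: enumerate the nonempty $m_n$-cylinders from the transition matrix, find a computable representative in each (periodic/preperiodic points suffice), query the oracle, assign rational values, and apply the triangle inequality.

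Your fallback framing (b), however, contains a genuine error. Sampling $\phi$ at one representative per $m$-cylinder and halting when the ``observed spread'' is small does \emph{not} certify that $\var_m(\phi) < 2^{-n-1}$: the finitely many queried values give at best a \emph{lower} bound on the variation, never an upper bound, since $\phi$ can oscillate arbitrarily between sample points within the same cylinder. So the search can terminate while the actual $\|\phi - \phi_n\|_\infty$ remains large, and no amount of ``slack'' in the comparison fixes this. The only legitimate way to extract a modulus from an evaluation oracle is to exploit the finite-use property of the oracle machine itself (run it on all names of a given prefix length and invoke compactness/K\"onig's lemma to bound the prefix length queried); a black-box evaluator alone does not suffice. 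Since framing (a) already gives the lemma, I would simply drop (b) or replace it by the finite-use argument.
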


We observe that when $\phi\in LC_k(X,\bR)\setminus LC_{k-1}(X,\bR)$, the construction of Lemma \ref{lemabLC} could produce a $\phi_n$ with $m_n\gg k$.  For example, fix $\ell>k$ and consider a nonempty cylinder $\cC_k(x)$ of $X$.  Suppose that the partition of $\cC_k(x)$ into cylinders of length $\ell$ has at least two nonempty cylinders of length $\ell$ (this will be the case when $\ell$ is sufficiently large and $X$ is transitive and has positive topological entropy).  Perturbing the value of $\phi$ on one of the cylinders of length $\ell$ results in a potential $\phi_n$ where $\|\phi-\phi_n\|_\infty$ is arbitrarily small, while $m_n\geq\ell$ is arbitrarily large.  There is, however, a procedure to prevent $m_n$ from growing arbitrarily large.  We describe this procedure in the following result:

\begin{lemma}
There exists a Turing machine, which, given input $n\in\bN$ and an oracle $\chi$ of $\phi\in LC(X,\bR)$ produces $\ell_n\in\bN$ and $\widetilde{\phi}_n\in LC_{\ell_n}(X,\bR)$ with the following properties:
\begin{enumerate}
\item $\|\phi-\widetilde{\phi}_n\|_{\infty}<2^{-n}$ and
\item If $\var_j(\phi)=0$, then $\ell_n\leq j$.
\end{enumerate}
\end{lemma}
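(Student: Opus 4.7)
The plan is to first extract a rational locally constant approximation via Lemma \ref{lemabLC}, and then greedily coarsen it to the shortest cylinder length compatible with the target accuracy. I would apply Lemma \ref{lemabLC} to the given oracle $\chi$ with precision $n+2$ to produce $m_n \in \bN$ and $\phi_n \in LC_{m_n}(X,\bQ)$ satisfying $\|\phi-\phi_n\|_\infty < 2^{-(n+2)}$. Note that $m_n$ may exceed any $j$ with $\var_j(\phi)=0$, so I cannot simply output $\phi_n$; it serves as a starting point to be coarsened.

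Next, I would loop over $j=1,2,\dots,m_n$ and, at each step, enumerate the (finitely many) nonempty length-$j$ cylinders $C$ in $X$. For each such $C$, I would compute
\[
M_C^{(j)} = \max_{x\in C}\phi_n(x) \quad \text{and} \quad m_C^{(j)}=\min_{x\in C}\phi_n(x),
\]
which are rational maxima and minima over the finitely many length-$m_n$ sub-cylinders of $C$. Then I would test whether $M_C^{(j)} - m_C^{(j)} \leq 2^{-(n+1)}$ holds for every such $C$, and take $\ell_n$ to be the smallest $j$ for which this test succeeds. The loop terminates by $j=m_n$, where the test succeeds trivially because $\phi_n \in LC_{m_n}(X,\bQ)$. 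Finally, I would define $\widetilde{\phi}_n \in LC_{\ell_n}(X,\bQ)$ by setting $\widetilde{\phi}_n(x) = \tfrac{1}{2}(M_C^{(\ell_n)} + m_C^{(\ell_n)})$ for $x\in C$. Every step is effective since all quantities are rational and all cylinders of each fixed length are computably enumerable from the transition matrix.

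Property (1) would follow from two triangle inequalities: by construction $\|\widetilde{\phi}_n - \phi_n\|_\infty \leq 2^{-(n+2)}$, whence $\|\widetilde{\phi}_n - \phi\|_\infty < 2^{-(n+2)} + 2^{-(n+2)} = 2^{-(n+1)} < 2^{-n}$. For property (2), I would argue as follows: if $\var_j(\phi)=0$, then $\phi$ is constant (say with value $a_C$) on each length-$j$ cylinder $C$, and since $\|\phi-\phi_n\|_\infty < 2^{-(n+2)}$, every value $\phi_n(x)$ for $x\in C$ lies in the open interval $(a_C-2^{-(n+2)}, a_C+2^{-(n+2)})$. Consequently $M_C^{(j)} - m_C^{(j)} < 2^{-(n+1)}$, so the test at level $j$ succeeds and thus $\ell_n \leq j$.

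The main (mild) obstacle is the calibration of the two thresholds: the approximation precision $2^{-(n+2)}$ in the first step and the coarsening tolerance $2^{-(n+1)}$ in the second must be balanced so that (i) the overall error strictly beats $2^{-n}$ and (ii) the coarsening test is loose enough that it cannot fail at any $j$ with $\var_j(\phi)=0$. The above ratios achieve both; pushing the tolerance down would risk violating (2), and raising it would risk violating (1).
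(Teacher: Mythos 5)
Your proof is correct and follows essentially the same strategy as the paper's: approximate $\phi$ by a rational $\phi_n\in LC_{m_n}(X,\bQ)$ via Lemma~\ref{lemabLC}, then coarsen by computing variations over nonempty cylinders of each length $j\le m_n$ and taking $\ell_n$ to be the smallest $j$ passing a tolerance test. Your explicit choice of precision $2^{-(n+2)}$ for $\phi_n$, tolerance $2^{-(n+1)}$ for the test, and the midpoint value for $\widetilde{\phi}_n$ is, if anything, a slightly cleaner bookkeeping of the constants than the paper's.
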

\begin{proof}
Let $\phi_n$ and $m_n$ be produced as in Lemma \ref{lemabLC}.  We begin by considering the cylinders of length $m_n$.  In \cite[Section 5.2]{BSW}, it is shown that there is a Turing machine which considers each cylinder $\cC_{m_n}(\tau)$ and either finds a point $x\in\cC_{m_n}(\tau)\cap X$ or reports that the cylinder is empty.  Therefore, we may approximate the values that $\phi_n$ attains to any precision.

For each $1\leq i\leq m_n$, we can also consider all cylinders of length $i$.  We observe that each nonempty cylinder of length $i$ can be partitioned into a finite collection of cylinders of length $m_n$.  Since the maximum and minimum over a finite set is computable, we can find an upper bound on the variation $\var_i(\phi_n)$ with error at most $2^{-n+1}$.  Let $\ell_n$ be the smallest value of $i$ where the variation is bounded above by $2^{-n+2}$.  We construct $\widetilde{\phi}_n\in LC_{\ell_n}(X,\bR)$ by combining all cylinders of length $\ell_n$, and, for each nonempty cylinder $\cC_{\ell_n}(\widetilde{\tau})$ of length $\ell_n$, assigning the value of $\phi_n(\cC_{m_n}(\tau))$ to $\widetilde{\phi}_n(\cC_{\ell_n}(\widetilde{\tau}))$ for some arbitrary nonempty cylinder of length $m_n$ contained in $\cC_j(\widetilde{\tau})$.

We observe that if $\var_j(\phi)=0$, then $\ell_n\leq j$ as follows:  By the construction of $\phi_n$, the variation in a $j$-cylinder is at most $2^{-n+1}$, so the upper bound on the variation is at most $2^{-n+2}$.  Additionally, if $\phi\in LC_k(X,\bR)\setminus LC_{k-1}(X,\bR)$, we can show that for $n$ sufficiently large, $\ell_n=k$ as follows: The variation of $\phi$ on $(k-1)$-cylinders is bounded away from zero since $\phi\not\in LC_{k-1}(X,\bR)$, so, when $n$ is sufficiently large, the variation bound of $2^{-n+2}$ is small enough to distinguish $(k-1)$-cylinders.
\end{proof}

Computability can also be extended to the space $\cM$ of invariant measures on $X$.  In this case, approximations to measures are given by convex combinations of Dirac measures.  Moreover, we use the Wasserstein-Kantorovich distance, which generates the weak$^\ast$ topology and is defined by
$$
W_1(\mu_1,\mu_2)=\sup_{\phi\in1\text{-Lip(X)}}\left|\mu_1(\phi)-\mu_2(\phi)\right|
$$
for all $\mu_1,\mu_2\in \cM$ (where $\text{1-Lip(X)}$ denotes the space of Lipschitz continuous functions on $X$ with Lipschitz constant $1$).  It is shown in \cite{GHR} that this distance is computable.

\section{Upper semi-computability of the residual entropy}\label{sec:3}
The goal of this section is to prove Theorem A. We start with a discussion of the pressure function for continuous potentials. These results are fairly standard in the H\"older continuous case, but they are more challenging for potentials which are only continuous.  The difficulties arise from the lack of uniqueness results for equilibrium states and, in particular, the possibility of phase transitions.  To overcome these challenges, we make use of several tools, including methods from convex analysis, see, e.g., \cite{R}. 

Let $\phi:X\to\bR$ be a fixed continuous potential. We note that we do not  assume the uniqueness of the equilibrium states. We call $\beta\mapsto P(\beta)\eqdef \Ptop(\beta\phi)$ the pressure function of $\phi$. The pressure function is convex, see, e.g., \cite{Wal:81}, and, thus, it has left and right derivatives 
$$
\partial_\pm P(\beta)=\lim_{h\to 0^\pm}\frac{ P(\beta+ h)-P(\beta)}{h}.
$$
 Moreover, since $\mu\mapsto h_\mu(f)$ is upper semi-continuous, it follows from \cite[Proposition 1]{Je} and \cite[Lemma 1]{Wal92} that
\begin{equation}\label{esal1}
\partial_-P(\beta)=\min_{\mu\in \ES(\beta\phi)} \mu(\phi)\quad\text{and}\quad\partial_+P(\beta)=\max_{\mu\in \ES(\beta\phi)} \mu(\phi).
\end{equation}
Furthermore, since $\ES(\beta\phi)$ is a compact and convex subset of $\cM$, for all $\partial_-P(\beta)\leq \alpha\leq \partial_+P(\beta)$, there exists $\mu_\alpha\in \ES(\beta\phi)$ with $\mu_\alpha(\phi)=\alpha$. In particular, the minimum and maximum in Equation \eqref{esal1} is well-defined. 
We observe that $\beta\mapsto P(\beta)$ is differentiable at $\beta$ if and only if $I_{\beta}\eqdef\{\mu(\phi):\mu\in \ES(\beta\phi)\}$ is a singleton\footnote{We note that the nondifferentiability points of the pressure function are phase transitions, i.e., points of coexistence of multiple equilibrium states where each ergodic equilibrium state represents a phase.}.     
 Moreover,
\begin{equation}\label{eqJencor2}
\inn I(\phi)=(a_\phi,b_\phi)\subset \bigcup_{\beta\in \bR} I_\beta,
\end{equation}
see \cite[Corollary 2]{Je}.
Since   $\beta\mapsto P(\beta)$ is convex, it is differentiable on $\bR$ with the exception of at most countably many points $\beta\in \bR$. We define 
$$
h_{\rm max}(\beta)=\max_{\mu\in \ES(\beta\phi)} h_\mu(f) \quad\text{and}\quad h_{\rm min}(\beta)=\min_{\mu\in \ES(\beta\phi)} h_\mu(f).$$
Thus, Equation \eqref{varprinciple} yields
\begin{equation}\label{eqnet}
P(\beta)=h_{\max}(\beta)+\beta \partial_-P(\beta)=h_{\min}(\beta)+\beta \partial_+P(\beta).
\end{equation}
Moreover, the  convexity of the pressure function implies 
\begin{equation}\label{eqmon}
\partial_+P(\beta_1)\leq \partial_-P(\beta_2)\quad \text{and}\quad h_{\rm min}(\beta_1)\geq h_{\max}(\beta_2)
\end{equation}
whenever $\beta_1<\beta_2$.  First, we consider the case when $P$ is differentiable at $\beta$. In this case, Equation \eqref{eqnet} becomes
\begin{equation}\label{eqpresdiff}
P(\beta)=h(\beta)+\beta \partial P(\beta),
\end{equation}
where $h(\beta)\eqdef h_{\max}(\beta)=h_{\min}(\beta)$ and $\partial P(\beta)\eqdef \partial_-P(\beta_1)=\partial_+P(\beta_1)$.  Using these functions, we develop a series of results to study the computability of the pressure and entropy functions.

\begin{proposition} \label{proholder}
Suppose that $\phi\in C(X,\bR)$ is given by an oracle.  Let $B$ be the set of points where the function $\beta\mapsto P(\beta)$ is differentiable.  Then, the functions $\beta\mapsto\partial P(\beta)$ and $\beta\mapsto h(\beta)$ are computable on $B$.
\end{proposition}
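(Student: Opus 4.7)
The plan is to chain together (a) computability of the pressure functional on $C(X,\bR)$, (b) a convexity-based two-sided sandwich that captures $\partial P(\beta)$ at any point of differentiability, and (c) the identity $h(\beta)=P(\beta)-\beta\,\partial P(\beta)$ from \eqref{eqpresdiff}.

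First I would establish that $\beta\mapsto P(\beta)=\Ptop(\beta\phi)$ is computable on $\bR$. The variational principle and the trivial bound $|\mu(\phi)-\mu(\psi)|\le\|\phi-\psi\|_\infty$ yield the $1$-Lipschitz estimate $|\Ptop(\phi)-\Ptop(\psi)|\le\|\phi-\psi\|_\infty$. Since $\phi$ can be computably approximated in supremum norm by rational locally constant potentials (Lemma \ref{lemabLC}), and since the pressure of a computable locally constant potential on an SFT is computable (as the leading Perron--Frobenius eigenvalue of a nonnegative matrix built from the local constants and the transition matrix, cf.\ \cite{BSW}), the pressure functional $\psi\mapsto\Ptop(\psi)$ is computable on $C(X,\bR)$. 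Combined with computability of scalar multiplication $\beta\mapsto\beta\phi$, this gives computability of $\beta\mapsto P(\beta)$.

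Next I would exploit convexity. For each $h>0$,
\begin{equation*}
L(h):=\frac{P(\beta)-P(\beta-h)}{h}\;\le\;\partial_-P(\beta)\;\le\;\partial_+P(\beta)\;\le\;\frac{P(\beta+h)-P(\beta)}{h}=:U(h),
\end{equation*}
with $L(h)$ nondecreasing and $U(h)$ nonincreasing as $h\downarrow 0$. When $\beta\in B$ both outer sequences converge to the common value $\partial P(\beta)$, so $U(h)-L(h)\to 0$. The algorithm for precision $2^{-n}$ is: for $k=1,2,\dots$, compute approximations $\widetilde U_k$ and $\widetilde L_k$ of $U(2^{-k})$ and $L(2^{-k})$ to internal precision $2^{-n-k-5}$ (feasible since $P$ is computable, even though division by $2^{-k}$ inflates the rounding error by $2^k$), and halt the first time $\widetilde U_k-\widetilde L_k<2^{-n-1}$, outputting the midpoint $(\widetilde U_k+\widetilde L_k)/2$. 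Termination is guaranteed by $\beta\in B$, and by construction the output lies within $2^{-n}$ of $\partial P(\beta)$. Computability of $h$ on $B$ then follows immediately from \eqref{eqpresdiff} via a computable arithmetic combination.

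The main obstacle is that at a differentiability point of a general convex function one has no a priori computable modulus controlling how rapidly the one-sided difference quotients approach the derivative, so no single choice of $h$ can be certified in advance. The two-sided sandwich dissolves this difficulty: the halting criterion is tested directly on the computable gap $\widetilde U_k-\widetilde L_k$, which by convexity shrinks monotonically and vanishes in the limit precisely when $\beta\in B$, so no uniform quantitative modulus across $B$ is needed to drive the algorithm.
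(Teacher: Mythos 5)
Your proof is correct and follows essentially the same route as the paper's: approximate $\partial P(\beta)$ by two-sided difference quotients of the computable pressure function, halt when the (computable) gap between the upper and lower quotient drops below the desired tolerance -- which is guaranteed to happen since differentiability at $\beta$ forces the gap to vanish -- and then recover $h(\beta)$ from \eqref{eqpresdiff}. The paper simply cites \cite{BSW,Sp} for computability of pressure and leaves the halting-criterion bookkeeping implicit, whereas you spell out both; the underlying argument is the same.
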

\begin{proof}
For $n\in\bN$, we consider the left and right difference quotients of $P$ at $\beta$,
$$
p_\pm(n)=\frac{P(\beta\pm 1/n)-P(\beta)}{\pm 1/n}.
$$
Since the topological pressure is computable, see \cite{BSW, Sp}, we can compute  $p_\pm(n)$  to any given accuracy.
On the other hand, since $P$ is convex, when $P$ is differentiable at $\beta$, $p_\pm(n)$ converges from above and below to $\partial P(\beta)$ as $n\to\infty$, respectively.  When the upper and lower bounds are close enough, any point between them can be used to approximate $\partial P(\beta)$ to arbitrary precision.  Finally, the computability of $h$ follows from Equation \eqref{eqpresdiff}.
\end{proof}
Proposition \ref{proholder} shows that if $\phi$ is a H\"older continuous potential given by an oracle, then the functions $\beta\mapsto h(\beta)$ and $\beta\mapsto \partial P(\beta)$ are computable. Combining this observation with Lemma \ref{lem1}, we conclude that Theorem A holds for H\"older continuous potentials. To prove the general case, 
we make use of the following result to include the possibility of phrase transitions:

\begin{proposition}\label{lem2}
Suppose $\phi\geq 0$ and let $0\leq\beta_1<\beta_2$. We define $\alpha=\alpha(\beta_1,\beta_2)=(P(\beta_2)-P(\beta_1))/(\beta_2-\beta_1)$. Then, there exist $\beta_1< \beta< \beta_2$ 
and $\mu\in\ES(\beta\phi)$ such that
\begin{equation}\label{eqentab}
P(\beta_1)-\beta_2 \alpha\leq h_\mu(f) \leq P(\beta_2)-\beta_1 \alpha.
\end{equation}
\end{proposition}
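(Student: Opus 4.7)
The plan is to apply convex analysis to locate a suitable $\beta \in (\beta_1,\beta_2)$ at which $\alpha$ is realized as a subderivative of $P$, extract a corresponding equilibrium state via the remarks following Equation~\eqref{esal1}, and then deduce the two-sided bound from the variational principle together with the monotonicity of $P$ induced by the assumption $\phi \geq 0$.

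The key device is the auxiliary convex function $Q(\beta) = P(\beta) - \alpha\beta$. A short calculation using the definition of $\alpha$ gives $Q(\beta_1) = Q(\beta_2)$, so by convexity $Q(\beta) \leq Q(\beta_1)$ for every $\beta \in [\beta_1,\beta_2]$. The minimum of $Q$ on $[\beta_1,\beta_2]$ must therefore be attained at some interior point $\beta \in (\beta_1,\beta_2)$: otherwise we would have $Q(\beta) > Q(\beta_1)$ for all $\beta \in (\beta_1,\beta_2)$, contradicting the convexity bound. At such a minimizer, the first-order condition $0 \in \partial Q(\beta)$ translates into $\partial_- P(\beta) \leq \alpha \leq \partial_+ P(\beta)$.

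Given this $\beta$, the remarks after Equation~\eqref{esal1} supply $\mu \in \ES(\beta\phi)$ with $\mu(\phi) = \alpha$, and then the variational principle yields $h_\mu(f) = P(\beta) - \beta\mu(\phi) = P(\beta) - \beta\alpha$. Because $\phi \geq 0$, $P$ is non-decreasing and $\alpha \geq 0$, so combining $P(\beta_1) \leq P(\beta) \leq P(\beta_2)$ with $\beta_1\alpha \leq \beta\alpha \leq \beta_2\alpha$ gives $P(\beta_1) - \beta_2\alpha \leq P(\beta) - \beta\alpha \leq P(\beta_2) - \beta_1\alpha$, which is the claimed inequality.

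The main delicate point is ensuring that the minimizer of $Q$ lies strictly inside $(\beta_1,\beta_2)$; the equality $Q(\beta_1) = Q(\beta_2)$ is precisely what prevents a purely boundary minimum and handles the case where $P$ has a corner (a phase transition) at which $\alpha$ would otherwise be ``jumped over'' by $\partial_\pm P$ at a single point. Once this step is in hand, the remaining estimates reduce to monotonicity of $P$ and are routine.
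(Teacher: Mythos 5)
Your proof is correct, and it takes a genuinely cleaner route than the paper's. The paper splits into two cases depending on whether $\partial_+ P(\beta_1) = \partial_- P(\beta_2)$ (the affine case, handled via Equation~\eqref{eqnet}) or $\partial_+ P(\beta_1) < \partial_- P(\beta_2)$ (the strictly convex case, where $\alpha \in \inn I(\phi)$ is invoked via Equation~\eqref{eqJencor2}, and Equation~\eqref{eqmon} is used to place $\beta$ in $(\beta_1,\beta_2)$). Your approach replaces this case split with a single argument: minimize the auxiliary convex function $Q(\beta) = P(\beta) - \alpha\beta$ over $[\beta_1,\beta_2]$, use $Q(\beta_1)=Q(\beta_2)$ together with convexity to produce an interior minimizer, and then read off $\partial_- P(\beta) \leq \alpha \leq \partial_+ P(\beta)$ from the first-order condition. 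This feeds directly into the remark after Equation~\eqref{esal1} that provides $\mu \in \ES(\beta\phi)$ with $\mu(\phi)=\alpha$, and the final bound is then identical to the paper's (variational principle plus monotonicity of $P$). What your route buys is economy: it bypasses Equation~\eqref{eqJencor2} and Equation~\eqref{eqmon} entirely, and it handles the phase-transition case and the affine case uniformly rather than as separate subarguments. The one place worth tightening in an exposition is the claim that the minimizer lies in the open interval: as you implicitly observe, either $Q$ is constant on $[\beta_1,\beta_2]$ (in which case every interior point is a minimizer) or some interior point strictly beats the common boundary value; stating this dichotomy explicitly would make the ``otherwise'' step fully airtight.
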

\begin{proof}
First, we observe that since $\phi\geq 0$, the map $\beta\mapsto P(\beta)$ is increasing. If
$\partial_+P(\beta_1)=\partial_-P(\beta_2)$, then $h(\beta)$ and $\partial P(\beta)$ are constant for $\beta_1< \beta<\beta_2$, so $P\vert_{(\beta_1,\beta_2)}$ is an affine function of $\beta$.  Moreover, for all $\beta_1< \beta< \beta_2$, $\partial P(\beta)=\alpha$.  Finally, combining this with Equation (\ref{eqnet}), it follows that $P(\beta_2)-\beta_1\alpha=h(\beta)+(\beta_2-\beta_1)\partial P(\beta)\geq h(\beta)$ and $P(\beta_1)-\beta_2\alpha=h(\beta)-(\beta_2-\beta_1)\partial P(\beta)\leq h(\beta)$.  Therefore, Inequality \eqref{eqentab} holds for all $\beta_1< \beta< \beta_2$ and all $\mu\in \ES(\beta\phi)$.

It remains to consider the case where $\partial_+P(\beta_1)<\partial_-P(\beta_2)$. Since $\alpha$ is the slope of the line segment joining $(\beta_1,P(\beta_1\phi))$ and $(\beta_2,P(\beta_2\phi))$, the convexity  of the pressure function  implies that  $\partial_+P(\beta_1)<\alpha<\partial_-P(\beta_2)$.  Thus, by Equation \eqref{esal1}, $\alpha\in \inn I(\phi)$. It now follows from Equation \eqref{eqJencor2} that there exists  $\beta\in\bR$ and $\mu\in \ES(\beta\phi)$ such that $\mu(\phi)=\alpha$.  Moreover, by Equation \eqref{eqmon}, we may restrict $\beta$ to $\beta_1<\beta<\beta_2$.  Applying Equation \eqref{varprinciple} yields
$$h_\mu(f)=P(\beta)-\beta\mu(\phi)=P(\beta)-\beta\alpha.$$ Finally, Equation \eqref{eqentab} follows  since the pressure function is increasing.
\end{proof}

The following auxiliary lemma is used in the proofs of both Theorems A and B.  In the lemma, we show that the endpoints of $I(\phi)=[a_\phi,b_\phi]$ are computable points.

\begin{lemma}\label{lem:computableI}
The functions $\phi\mapsto a_\phi$ and $\phi\mapsto b_\phi$ are  computable on 
$C(X,\bR)$. 
\end{lemma}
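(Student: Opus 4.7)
The plan is to reduce to the locally constant case via Lemma \ref{lemabLC}, and then exploit the finite description of $I(\phi)$ given by Equation \eqref{eperconv}. Given an oracle for $\phi \in C(X,\bR)$ and a target precision $n$, I would first invoke Lemma \ref{lemabLC} to produce $m_n \in \bN$ and a rational-valued $\phi_n \in LC_{m_n}(X,\bQ)$ with $\|\phi - \phi_n\|_\infty < 2^{-n-1}$. Since $|\mu(\phi) - \mu(\phi_n)| \leq \|\phi - \phi_n\|_\infty$ for every $\mu \in \cM$, taking sup and inf over $\cM$ gives $|a_\phi - a_{\phi_n}| < 2^{-n-1}$ and $|b_\phi - b_{\phi_n}| < 2^{-n-1}$, so it suffices to compute $a_{\phi_n}$ and $b_{\phi_n}$ exactly in rational arithmetic.

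For the exact computation, Equation \eqref{eperconv} applied to $\phi_n \in LC_{m_n}(X,\bR)$ yields $I(\phi_n) = \conv\{\mu_x(\phi_n) : x \in \EPer^{m_n}(f)\}$, so $a_{\phi_n}$ and $b_{\phi_n}$ are the minimum and maximum of the \emph{finite} set $\{\mu_x(\phi_n) : x \in \EPer^{m_n}(f)\}$. A Turing machine can enumerate $\EPer^{m_n}(f)$ by listing all words of length at most $m_c(m_n)$, retaining those that generate legal periodic points of $f$ (a decidable check using the transition matrix $A$), and then verifying the $m_n$-elementarity condition in \eqref{defkcyl}. For each such $x$ of prime period $p$, the value $\mu_x(\phi_n) = \frac{1}{p}\sum_{i=0}^{p-1} \phi_n(f^i(x))$ from Formula \eqref{muxphi} is a rational number computed exactly, since $\phi_n$ takes rational values on cylinders of length $m_n$.

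Outputting $a_{\phi_n}$ (respectively $b_{\phi_n}$) as the $n$-th approximation then witnesses computability, with total error less than $2^{-n-1} < 2^{-n}$. There is no serious obstacle here: the cardinality $|\EPer^{m_n}(f)|$ is bounded by $m_c(m_n) \leq d^{m_n}$, which affects running time but not computability in principle, and no issue with phase transitions arises because we are simply maximizing and minimizing a linear functional over an explicit finite set of ergodic measures. I expect the argument for $a_\phi$ to be a verbatim copy of the one for $b_\phi$ with max replaced by min.
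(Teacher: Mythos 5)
Your proof is correct and follows essentially the same route as the paper: reduce to the locally constant case via Lemma~\ref{lemabLC}, use the Lipschitz continuity of $\phi\mapsto a_\phi, b_\phi$ to control the approximation error, invoke Equation~\eqref{eperconv} to reduce to a finite set of $k$-elementary periodic point measures, and evaluate each via Formula~\eqref{muxphi}. The only (helpful) refinement you make is tracking the error budget explicitly with $2^{-n-1}$ and noting that the values $\mu_x(\phi_n)$ can be computed exactly in rational arithmetic because $\phi_n$ is $\bQ$-valued, whereas the paper is content to say they can be approximated to arbitrary precision.
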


\begin{proof}
We first note that the functions $\phi\mapsto a_\phi$ and $\phi\mapsto b_\phi$ are Lipschitz continuous with Lipschitz constant $1$ on $C(X,\bR)$.  Thus, by applying Lemma \ref{lemabLC} to generate $\phi_n$, we may conclude that $|a_\phi-a_{\phi_n}|<2^{-n}$ and $|b_\phi-b_{\phi_n}|<2^{-n}$.  Therefore, it is enough to prove the statement for locally constant potentials.  Let $m_n$ be the integer constructed in Lemma \ref{lemabLC}, i.e., $\phi_n\in LC_{m_n}(X,\bR)$.  Then, by Equation \refeq{eperconv}, it is enough to approximate $\mu_x(\phi)$ for all $m_n$-elementary periodic points of $X$.  We use Formula \refeq{muxphi} to approximate $\mu_x(\phi)$ to any desired precision.  Since there are only finitely many $m_n$-elementary periodic points and the maximum and minimum of a finite set are computable, we can approximate $a_\phi$ and $b_\phi$ to any desired precision.
\end{proof}

We are now ready to present the proof of Theorem A which uses the computability of the topological pressure, Lemma \ref{lem1}, and Proposition \ref{lem2}.  We begin with a technical lemma that forms the central argument of the main theorem.

\begin{lemma}\label{lem:approxentropy}
Let $\phi\in C(X,\bR)$ with $\phi\geq 0$ be given by an oracle $\psi$.  Suppose that rational numbers $\beta_1<\beta_2$ are given.  There exists a Turing machine $\chi$ so that $\chi(n,\phi)$ is a rational number such that there exists\footnote{We note that the lemma does not require the computability of $\beta$, only its existence.} a $\beta$ with $\beta_1<\beta<\beta_2$ and $\mu\in\ES(\beta\phi)$ such that $|\chi(n,\psi)-h_\mu(f)|<2^{-n}$.
\end{lemma}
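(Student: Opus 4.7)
My plan is to apply Proposition \ref{lem2} on a very narrow rational subinterval $(\beta_1',\beta_2') \subset (\beta_1,\beta_2)$, so that the interval bracketing $h_\mu(f)$ in inequality \eqref{eqentab} is narrow, and then use the computability of the topological pressure to output a rational approximation to the midpoint of that bracket. Observe that the lemma only asserts the \emph{existence} of $\beta$ and $\mu$, so the algorithm never needs to produce them explicitly.

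First, I extract a computable upper bound $M$ for $\|\phi\|_\infty$ from the oracle $\psi$: by Lemma \ref{lemabLC} I can compute $\phi_1 \in LC_{m_1}(X,\bQ)$ with $\|\phi-\phi_1\|_\infty < 1$, so $M \eqdef \|\phi_1\|_\infty + 1$ is a computable upper bound. The variational principle yields the Lipschitz estimate $|P(\beta)-P(\beta')| \leq \|\phi\|_\infty|\beta-\beta'| \leq M|\beta-\beta'|$. I then pick rationals $\beta_1 < \beta_1' < \beta_2' < \beta_2$ with $\beta_2'-\beta_1' \leq 2^{-n-3}/M$, so that $P(\beta_2')-P(\beta_1') \leq 2^{-n-3}$. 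Proposition \ref{lem2} applied to $\beta_1'<\beta_2'$ produces $\beta \in (\beta_1',\beta_2') \subset (\beta_1,\beta_2)$ and $\mu \in \ES(\beta\phi)$ with
$$P(\beta_1')-\beta_2'\alpha \;\leq\; h_\mu(f) \;\leq\; P(\beta_2')-\beta_1'\alpha,$$
where $\alpha = (P(\beta_2')-P(\beta_1'))/(\beta_2'-\beta_1')$. A direct calculation shows the width of this bracket equals $2(P(\beta_2')-P(\beta_1')) \leq 2^{-n-2}$, so its midpoint $m = \tfrac12(P(\beta_1')+P(\beta_2')) - \tfrac12(\beta_1'+\beta_2')\alpha$ satisfies $|m-h_\mu(f)| \leq 2^{-n-3}$.

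Since $\Ptop$ is a computable function of the potential (see \cite{BSW,Sp}), I can compute rational approximations to $P(\beta_1')$ and $P(\beta_2')$ to arbitrary precision, and combining these by rational arithmetic with the fixed rationals $\beta_1',\beta_2'$ yields a rational $q$ with $|q-m| \leq 2^{-n-3}$. Taking $\chi(n,\psi)=q$ then gives $|\chi(n,\psi)-h_\mu(f)| < 2^{-n}$, as required. The main conceptual obstacle is that $P$ may fail to be differentiable inside $(\beta_1,\beta_2)$, since $\phi$ is only continuous and phase transitions are possible; but Proposition \ref{lem2} is precisely designed to sidestep this, and the algorithm never has to detect whether or where differentiability fails. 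The remaining work is routine error bookkeeping in the Lipschitz bound, the bracket-width estimate, and the final rational approximation of $m$.
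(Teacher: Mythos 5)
Your proof is correct and uses essentially the same ingredients as the paper's: Proposition~\ref{lem2} to bracket $h_\mu(f)$, the computability of the topological pressure, and the observation that the bracket shrinks as $\beta_2'-\beta_1'\to 0$. The one refinement you add is to replace the paper's unbounded search over a sequence of shrinking subintervals with an a~priori choice of $\beta_2'-\beta_1'$ via the explicit Lipschitz constant $M\ge\|\phi\|_\infty$ for $\beta\mapsto P(\beta)$; this is a cleaner implementation but not a different argument. Your bracket-width computation (width $=2(P(\beta_2')-P(\beta_1'))$ since $(\beta_2'-\beta_1')\alpha=P(\beta_2')-P(\beta_1')$) and the resulting error budget are correct, and your observation that one only needs the existence of $\beta$, not its computability, matches the footnote in the statement.
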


\begin{proof}
We observe that since the pressure function $\beta\mapsto\Ptop(\beta\phi)$ is continuous, as $\beta_2\rightarrow\beta_1$, the upper and lower bounds of Inequality (\ref{eqentab}) approach each other.  Therefore, if we can find $\beta_1'$ and  $\beta_2'$ so that $\beta_1\leq \beta_1'<\beta_2'\leq\beta_2$ and the upper and lower bounds of Inequality (\ref{eqentab}) are within $2^{-n}$, any rational number satisfying the inequalities of Inequality (\ref{eqentab}) can be used to approximate $h_\mu(f)$.

We recall that the pressure function $\beta\mapsto\Ptop(\beta\phi)$ is computable, see \cite{BSW,Sp}.  Therefore, the upper and lower bounds in Inequality (\ref{eqentab}) are also computable.  We consider a sequence $(\beta_{1,m}',\beta_{2,m}')$ of pairs of rational numbers so that $\beta_1\leq\beta_{1,m}'<\beta_{2,m}'\leq \beta_2$ and $\beta_{2,m}'-\beta_{1,m}'$ decreases to zero as $m\to\infty$.  By approximating the upper and lower bounds of Inequality (\ref{eqentab}) sufficiently well for each $m$, we may compute an $m$ so that the upper and lower bounds of Inequality (\ref{eqentab}), when applied to $\beta_{1,m}'$ and $\beta_{2,m}'$, are within $2^{-n}$. 
\end{proof}

Next, we present the proof of Theorem A, which is broken into the following two statements:

\begin{theorem}\label{halfThmA}
The function $\phi\mapsto h_{\infty,\phi}$ is upper semi-computable on $C(X,\bR)$.
\end{theorem}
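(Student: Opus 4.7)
The plan is to build a Turing machine that, given an oracle for $\phi$, outputs a nonincreasing rational sequence $\tilde Q_n$ converging to $h_{\infty,\phi}$. The central idea is Lemma \ref{lem1}(b): for any strictly increasing sequence $\beta^{(n)} \uparrow \infty$ and any choice $\mu_n \in \ES(\beta^{(n)} \phi)$, the entropies $h_{\mu_n}(f)$ decrease to $h_{\infty,\phi}$. Hence any quantity sufficiently close to such an $h_{\mu_n}(f)$ gives, up to a controlled error, an upper bound on $h_{\infty,\phi}$, and these upper bounds tend to $h_{\infty,\phi}$. Lemma \ref{lem:approxentropy} is the mechanism that manufactures such quantities.

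First I would normalize $\phi$ so that the hypothesis $\phi \geq 0$ of Lemma \ref{lem:approxentropy} holds. Using Lemma \ref{lemabLC} applied with, say, $n=1$, the machine extracts a rational $c > \|\phi\|_\infty$ from the oracle (e.g., $c = \|\phi_1\|_\infty + 1$ where $\phi_1 \in LC_{m_1}(X,\bQ)$) and replaces $\phi$ by $\phi' \eqdef \phi + c \geq 0$. Since constant shifts preserve equilibrium states, $\ES(\beta\phi') = \ES(\beta\phi)$, so $h_{\infty,\phi'} = h_{\infty,\phi}$ and it suffices to upper semi-compute $h_{\infty,\phi'}$. For each $n \in \bN$, invoke Lemma \ref{lem:approxentropy} on $\phi'$ with $(\beta_1,\beta_2) = (n,n+1)$ to obtain a rational $q_n$ with $|q_n - h_{\mu_n}(f)| < 2^{-n}$ for some $\mu_n \in \ES(\beta^{(n)}\phi')$ and $\beta^{(n)} \in (n,n+1)$. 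Set $Q_n \eqdef q_n + 2^{-n}$, so that $Q_n > h_{\mu_n}(f)$; because $(\beta^{(n)})_n$ is strictly increasing to $\infty$, Lemma \ref{lem1}(b) yields $Q_n > h_{\mu_n}(f) \geq h_{\infty,\phi'}$ and $Q_n \to h_{\infty,\phi'}$.

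Finally, the machine outputs $\tilde Q_n \eqdef \min(Q_1,\dots,Q_n)$. This sequence is nonincreasing by construction and still satisfies $\tilde Q_n \geq h_{\infty,\phi'}$; combined with $\tilde Q_n \leq Q_n \to h_{\infty,\phi'}$, a squeeze argument delivers $\tilde Q_n \to h_{\infty,\phi'} = h_{\infty,\phi}$, establishing upper semi-computability. The main technical obstacle, which has already been bypassed upstream by Lemma \ref{lem:approxentropy} through the convex analysis in Proposition \ref{lem2}, is the possible presence of phase transitions: we cannot algorithmically detect them, nor directly compute $h_\mu(f)$ for any individual $\mu \in \ES(\beta\phi)$. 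The key insight is that, even across a phase transition, some $h_\mu(f)$ for $\mu \in \ES(\beta\phi)$ with $\beta \in (\beta_1,\beta_2)$ is sandwiched between computable bounds built from $P(\beta_1)$, $P(\beta_2)$, and the secant slope $\alpha$; shrinking the $\beta$-interval drives the bounds together, and the rest of the argument is routine bookkeeping.
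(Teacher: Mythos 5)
Your proof is correct and follows essentially the same route as the paper: normalize $\phi$ to be nonnegative via Lemma \ref{lemabLC}, apply Lemma \ref{lem:approxentropy} along a sequence of disjoint $\beta$-windows tending to $\infty$, and invoke Lemma \ref{lem1}(b) to get convergence from above. You are in fact a bit more careful than the paper's terse proof in two small but useful ways: you avoid the paper's sign slip by adding $c > \|\phi\|_\infty$ rather than a ``lower bound $q$,'' and you explicitly enforce monotonicity of the output by padding with $2^{-n}$ and taking the running minimum, which is exactly what the definition of upper semi-computability requires.
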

\begin{proof}
Suppose that $\phi\in C(X,\bR)$ is given by an oracle.  We can compute a lower bound $q$ on $\phi$ by using Lemma \ref{lemabLC} to approximate $\phi$ by a locally constant potential $\phi_n$ and approximating a lower bound on $\phi_n$.  We observe that $\ES(\beta\phi)=\ES(\beta(\phi+q))$ and $\phi+q\geq 0$.  By applying Lemma \ref{lem:approxentropy} to a strictly increasing sequence $(\beta_n)_n$ converging to $\infty$, we compute a sequence of entropies $h_{\mu_n}(f)$ for $\mu_n\in\ES(\beta(\phi+q))$ with $\beta_n<\beta<\beta_{n+1}$.  Then, applying Lemma \ref{lem1}, we conclude that these entropies approach the residual entropy from above.
\end{proof}
Next, we characterize the continuity of the residual entropy. 
\begin{proposition}\label{prop0acomp}
The function  $\phi\mapsto h_{\infty,\phi}$ is continuous at $\phi_0\in C(X,\bR)$ if and only if $h_{\infty,\phi_0}=0$.
\end{proposition}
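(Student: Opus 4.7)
The forward direction is already Lemma \ref{lem2a}. For the converse, I would establish the contrapositive: assuming $h_{\infty,\phi_0}>0$, I will exhibit a sequence $\tilde\phi_n\to\phi_0$ in the supremum norm with $h_{\infty,\tilde\phi_n}=0$ for every $n$, which immediately forces discontinuity of $\phi\mapsto h_{\infty,\phi}$ at $\phi_0$.

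The sequence will be built in two approximation steps. First, Lemma \ref{lemabLC} yields a locally constant potential $\phi_n\in LC_{m_n}(X,\bR)$ with $\|\phi_n-\phi_0\|_\infty<2^{-n-1}$. Second, I perturb $\phi_n$ into $\cO_{m_n}$: the statement in Section \ref{sec:results} that $\cO_k$ is dense in $LC_k(X,\bR)\cap\overline{B}(0,1)$, combined with the homogeneity $\mu_{\infty,\alpha\phi}=\mu_{\infty,\phi}$ for $\alpha>0$ (which makes $\cO_k$ invariant under positive scaling), extends to density of $\cO_k$ in every ball $\overline{B}(0,r)$, $r>0$. Concretely, when $\phi_n\not\equiv 0$ I normalize into the unit ball, approximate inside $\cO_{m_n}\cap\overline{B}(0,1)$ to precision $2^{-n-1}/\|\phi_n\|_\infty$, and rescale, yielding $\tilde\phi_n\in\cO_{m_n}$ with $\|\tilde\phi_n-\phi_0\|_\infty<2^{-n}$; in the degenerate case $\phi_n\equiv 0$ (only possible when $\phi_0\equiv 0$ and $n$ is small), I simply take $\tilde\phi_n=\epsilon_n\psi$ for any fixed $\psi\in\cO_1\cap\overline{B}(0,1)$ and $\epsilon_n\to 0$.

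By the proposition in Section \ref{sec:2} characterizing $\cO_k$, each $\tilde\phi_n\in\cO_{m_n}$ has a unique maximizing measure which is a $k$-elementary periodic point measure, hence $h_{\infty,\tilde\phi_n}=0$. Thus $\tilde\phi_n\to\phi_0$ while $h_{\infty,\tilde\phi_n}\equiv 0\not\to h_{\infty,\phi_0}>0$, as required. The main obstacle I foresee is really just the bookkeeping needed to transfer the density of $\cO_k$ (stated only within the unit ball in Section \ref{sec:results}) to the unnormalized locally constant approximations from Lemma \ref{lemabLC}; this reduces to the routine scaling argument above based on the homogeneity of zero-temperature limits, so no deeper technical difficulty is expected.
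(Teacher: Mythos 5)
Your proof is correct and follows essentially the same route as the paper: use Lemma \ref{lem2a} for the forward direction, then for the converse exploit density of $\cO=\bigcup_k\cO_k$ in $LC(X,\bR)$ (via density of each $\cO_k$ and of $LC(X,\bR)$ in $C(X,\bR)$) together with the fact that $h_{\infty,\phi}=0$ for every $\phi\in\cO$. The only difference is that you spell out the rescaling needed to pass from density of $\cO_k$ in the unit ball to density in all of $LC_k(X,\bR)$, a detail the paper leaves implicit.
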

\begin{proof}
Let $\phi_0\in C(X,\bR)$. If $h_{\infty,\phi_0}=0$ then $\phi\mapsto h_{\infty,\phi}$ is continuous at $\phi_0$ by Lemma \ref{lem2a}.
On the other hand, suppose $h_{\infty,\phi_0}>0$.  We recall the definition of the set of uniquely maximizing potentials $\cO=\bigcup_k \cO_k$ from Section \ref{sec:results}.  We observe that since $\cO_k$ is dense in $LC_k(X,\bR)$, it follows that $\cO$ is dense in $LC(X,\bR)$.  Since $\cO$ consists of the uniquely maximizing locally constant potentials, for all $\phi\in\cO$, $h_{\infty,\phi}=0$.  Finally, since $LC(X,\bR)$ is dense in $C(X,\bR)$, we conclude that the map $\phi\mapsto h_{\infty,\phi}$ is not continuous at $\phi_0$.
\end{proof}

We end this section by using the previous two results to prove Corollary \ref{cor:computabilityentropy}
\begin{corollary}
The function $\phi\mapsto h_{\infty,\phi}$ is computable at $\phi_0$ if and only if $h_{\infty,\phi_0}=0$.
\end{corollary}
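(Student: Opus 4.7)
The plan is to establish the two directions of the equivalence separately, relying on Proposition~\ref{prop0acomp} for one direction and Theorem~\ref{halfThmA} for the other.

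\textbf{Forward direction.} I first argue the general fact that computability at a point is strictly stronger than continuity at that point. Let $\chi$ be a Turing machine witnessing computability of $\phi\mapsto h_{\infty,\phi}$ at $\phi_0$, and let $\psi$ be any oracle for $\phi_0$. Given $\varepsilon>0$, pick $n$ with $2^{1-n}<\varepsilon$, run $\chi(\psi,n)$, and let $\ell_n$ be the highest precision queried. For each $k\leq \ell_n$, the strict inequality $\|\psi(k)-\phi_0\|_\infty<2^{-k}$ leaves a slack $\delta_k>0$. For any $\phi$ with $\|\phi-\phi_0\|_\infty<\min_{k\leq \ell_n}\delta_k$, I define an oracle $\psi'$ for $\phi$ by $\psi'(k)=\psi(k)$ for $k\leq \ell_n$ (the triangle inequality still gives $\|\psi(k)-\phi\|_\infty<2^{-k}$) and any valid rational approximations of $\phi$ for $k>\ell_n$. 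Then $\chi(\psi',n)=\chi(\psi,n)$ because the Turing machine sees identical oracle values. The defining property of $\chi$ gives $|\chi(\psi,n)-h_{\infty,\phi_0}|<2^{-n}$ and $|\chi(\psi,n)-h_{\infty,\phi}|<2^{-n}$, hence $|h_{\infty,\phi}-h_{\infty,\phi_0}|<2^{1-n}<\varepsilon$. Thus $\phi\mapsto h_{\infty,\phi}$ is continuous at $\phi_0$, and Proposition~\ref{prop0acomp} forces $h_{\infty,\phi_0}=0$.

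\textbf{Backward direction.} Assume $h_{\infty,\phi_0}=0$. By Theorem~\ref{halfThmA} there exists a Turing machine $\chi'$ such that, for every oracle $\psi$ of every $\phi\in C(X,\bR)$ and every $m\in\bN$, the sequence $m\mapsto \chi'(\psi,m)$ is nonincreasing with $\lim_{m\to\infty}\chi'(\psi,m)=h_{\infty,\phi}$; in particular $\chi'(\psi,m)\geq h_{\infty,\phi}\geq 0$. I define $\chi(\psi,n)$ as follows: compute $\chi'(\psi,1),\chi'(\psi,2),\dots$ in succession until the first index $M$ where $\chi'(\psi,M)<2^{-n-1}$, then output the rational number $2^{-n-1}$. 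At $\phi_0$ this halts because $\chi'(\psi,m)$ decreases to $0$. Let $\ell_n$ be the highest precision queried across the finitely many $\chi'$-invocations used. If $\phi$ admits an oracle $\psi'$ agreeing with $\psi$ up to precision $\ell_n$, then each $\chi'(\psi',m)$ returns the same value as $\chi'(\psi,m)$, so the same stopping index $M$ is reached, $\chi(\psi',n)=2^{-n-1}$, and $\chi'(\psi',M)<2^{-n-1}$. The upper semi-computability of $\chi'$ on all of $C(X,\bR)$ then gives $0\leq h_{\infty,\phi}\leq \chi'(\psi',M)<2^{-n-1}$, so $|\chi(\psi',n)-h_{\infty,\phi}|=|2^{-n-1}-h_{\infty,\phi}|<2^{-n-1}<2^{-n}$, which is exactly the required property.

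\textbf{Main obstacle.} The delicate part is the quantifier structure in the definition of computability at a point: the rational produced from the oracle $\psi$ must remain a $2^{-n}$-approximation for \emph{every} nearby potential whose oracle can match $\psi$ on the queried precisions, not merely for $\phi_0$. This is why one-sided (upper semi-)computability alone is insufficient — one needs two-sided control. Since no lower semi-computable lower bound for $h_{\infty,\phi}$ is available in general (as a consequence of Theorem A), we exploit the free global lower bound $h_{\infty,\phi}\geq 0$ and sandwich $h_{\infty,\phi}$ inside $[0,2^{-n-1})$. This sandwich is sharp enough to approximate $h_{\infty,\phi}$ by a constant precisely when $h_{\infty,\phi_0}=0$, which is exactly where Theorem A's failure of global computability can be locally overcome.
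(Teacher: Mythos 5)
Your proof is correct and follows essentially the same strategy as the paper: the forward direction combines the observation that computability at a point implies continuity at that point (spelled out more formally than the paper does) with Proposition~\ref{prop0acomp}, and the backward direction runs the upper semi-computable sequence from Theorem~\ref{halfThmA} until it drops below a threshold and then uses nonnegativity of entropy to convert the one-sided bound into a two-sided one. The only cosmetic difference is that you output the fixed rational $2^{-n-1}$ rather than the computed upper bound itself, but both choices yield the same conclusion.
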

\begin{proof}
If $h_{\infty,\phi_0}>0$, then by Proposition \ref{prop0acomp}, we know that the map $\phi\mapsto h_{\infty,\phi}$ is not continuous at $\phi_0$, so the function cannot be computable at $\phi_0$.  On the other hand, suppose that $h_{\infty,\phi_0}=0$.  Then, for any oracle $\psi$ for $\phi_0$, using Theorem \ref{halfThmA}, there is a Turing machine $\chi$ so that $(\chi(\psi,m))_m$ is a sequence of rational numbers decreasing to zero.  By taking $m_n$ sufficiently large, $\chi(\psi,m_n)<2^{-n}$.  Let $\ell_n$ be the largest precision to which the oracle $\psi$ is queried within $\chi$ and let $\phi'\in C(X,\bR)$ be a function such that there exists an oracle $\psi'$ for $\phi'$ that agrees with $\psi$ up to precision $\ell_n$.  Then $\chi(\psi,m_n)=\chi(\psi',m_n)$ computes an upper bound on $h_{\phi',\infty}$.  Since the entropy is nonnegative, $|\chi(\psi,m_n)-h_{\phi',\infty}|<2^{-n}$ and the function $\phi\mapsto h_{\infty,\phi}$ is computable at $\phi_0$.
\end{proof}

\section{Computability of Zero Temperature Measures for locally constant potentials: the case of bounded cylinder length}\label{sec:4}

In this section, we prove Theorem B by breaking the statement into a series of propositions.  Throughout this section, we assume that $\theta$ from the Tychonov product topology, see Equation \eqref{defmet}, is a computable real number.  Moreover, we assume, whenever necessary, that $\phi\in LC_k(X,\bR)$ is a potential given by an oracle.

We observe that by using Lemma \ref{lem:computableI}, we can compute a superset of $\EPer_{\max}^k(\phi)$.  In particular, for $x\in\EPer^k(f)$, we can approximate $\mu_x(\phi)$ using  Formula \eqref{muxphi}.  Then, $\EPer_{\max}^k(\phi)$ is a subset of those $k$-elementary points for which the approximations of $\mu_x(\phi)$ and $b_\phi$ permit the possibility of equality.  By increasing the accuracy of these approximations, the computed superset of $\EPer_{\max}^k(\phi)$ shrinks.  We can conclude that there are Turing machines $\psi_{\cO_k}$ and $\psi_{\cO_k\,\dot\cup\,\cU_k}$ which take a potential as input and terminate if and only if $\phi\in\cO_k$ or $\phi\in\cO_k\,\dot\cup\,\cU_k$, respectively.  More precisely, if $\phi\in\cO_k$, then, when computing with high enough precision, the computation of a superset of $\EPer_{\max}^k(\phi)$ results in a single $k$-elementary orbit.  In this case, $\EPer_{\max}^k(\phi)$ equals this unique $k$-elementary orbit.  On the other hand, if $\phi\in\cO_k\,\dot\cup\,\cU_k$, then, with high enough precision, the computation of a superset of $\EPer_{\max}^k(\phi)$ results in a collection $k$-elementary orbits which have disjoint $k$-cylinder support.  As this discussion already hints at, we now prove that $\cO_k$ and $\cO_k\,\dot\cup\,\cU_k$ are both recursively open sets.
\begin{proposition}\label{prop:recursivelyopen}
The sets $\cO_k$ and $\cO_k\,\dot\cup\,\cU_k$ are recursively open sets.
\end{proposition}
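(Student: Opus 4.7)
My plan is to exhibit, for each of $\cO_k$ and $\cO_k \,\dot\cup\, \cU_k$, a Turing machine that halts on an oracle for $\phi \in LC_k(X,\bR)$ if and only if $\phi$ lies in the corresponding set. The standard conversion (enumerate $q \in LC_k(X,\bQ)$ together with precision bounds $\ell \in \bN$, simulate the halting machine on the oracle that outputs $q$ truncated to precision $\ell$ for at most $\ell$ computational steps, and emit a ball centered at $q$ of radius determined by the maximum precision queried whenever the simulation halts) then turns such a semi-decision procedure into the explicit enumeration of rational balls required by the definition of a recursively open set.

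Three computability ingredients drive the core algorithm. First, $\EPer^k(f)$ is finite and effectively enumerable. Second, for each $x \in \EPer^k(f)$, Formula \eqref{muxphi} expresses $\mu_x(\phi)$ as an average of at most the period of $x$ values of $\phi$, so $\mu_x(\phi)$ is computable from the oracle to arbitrary precision. Third, $b_\phi$ is computable by Lemma \ref{lem:computableI}. At stage $n$, I compute rational approximations $a_x$ with $|a_x - \mu_x(\phi)| < 2^{-n}$ for every $x \in \EPer^k(f)$, set $M_n = \max_x a_x$, and form the candidate family of orbits
\[
C_n = \big\{\,\text{$f$-orbit of }x \,:\, x \in \EPer^k(f),\ a_x > M_n - 2 \cdot 2^{-n}\,\big\}.
\]
A triangle-inequality argument shows that any orbit excluded from $C_n$ already satisfies $\mu_x(\phi) < b_\phi$, so $C_n$ is always a superset of the orbits appearing in $\EPer_{\max}^k(\phi)$.

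The machine for $\cO_k$ halts as soon as $|C_n| = 1$, and the machine for $\cO_k \,\dot\cup\, \cU_k$ halts as soon as the orbits in $C_n$ have pairwise disjoint $k$-cylinder supports, a finite combinatorial test on their generating segments. For correctness, let $\epsilon > 0$ be the gap between $b_\phi$ and the largest $\mu_x(\phi)$ over strictly non-maximizing orbits. If $\phi$ lies in the target set, then as soon as $2^{-n} < \epsilon/4$ the candidate family $C_n$ coincides with the set of true maximizing orbits, which by definition of $\cO_k$ or $\cO_k \,\dot\cup\, \cU_k$ meets the corresponding acceptance condition, so the machine halts. Conversely, the acceptance conditions are monotone in the right direction: if $\phi \in \cU_k \,\dot\cup\, \cV_k$ then two distinct maximizing orbits are always in $C_n$, defeating the $\cO_k$-test at every stage; and if $\phi \in \cV_k$ then two maximizing orbits sharing a $k$-cylinder are always in $C_n$, defeating the $\cO_k \,\dot\cup\, \cU_k$-test at every stage.

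The main subtlety is ruling out spurious early termination when $C_n$ transiently contains non-maximizing orbits. This is automatic because the tests rely on an obstruction rather than a positive witness: a single extra non-maximizing orbit inflates $|C_n|$ beyond $1$, and any shared $k$-cylinder among the true maximizers is inherited by every superset, hence present in $C_n$ at every stage. Thus both machines correctly semi-decide their target sets, and the standard conversion described in the first paragraph produces the explicit enumeration of balls witnessing recursive openness.
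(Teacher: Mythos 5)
Your proof is correct and follows essentially the same route as the paper: both construct a semi-decision Turing machine that progressively refines a computable superset of $\EPer_{\max}^k(\phi)$ (using the finiteness of $\EPer^k(f)$, the computability of each $\mu_x(\phi)$ via Formula \eqref{muxphi}, and the computability of $b_\phi$ from Lemma \ref{lem:computableI}), halting once the superset is a singleton (for $\cO_k$) or has pairwise disjoint $k$-cylinder supports (for $\cO_k\,\dot\cup\,\cU_k$), and both then pass from the semi-decision procedure to an explicit enumeration of rational balls by exploiting the gap between $b_\phi$ and the next-largest $\mu_y(\phi)$. The only cosmetic difference is that you package the final step as a general ``standard conversion'' of a halting oracle machine into a lower-computable open set, whereas the paper extracts the ball radius directly as half of the detected gap.
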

\begin{proof}
Fix a countable dense subset of $LC_k(X,\bR)\cap\overline{B}(0,1)$.  For instance, we may choose $\bQ^{m_c(k)}\cap\overline{B}(0,1)$.  We also use the Turing machines $\psi_{\cO_k}$ and $\psi_{\cO_k\,\dot\cup\,\cU_k}$ constructed above\footnote{For potentials $\phi\in\bQ^{m_c(k)}\cap\overline{B}(0,1)$, the values of $\phi$ are exact and $\EPer_{\max}^k(\phi)$ can be calculated explicitly.  We, however, do not use this fact here.}.  If $\phi\in\cO_k$, then there is a positive gap between the approximation to $b_\phi$ and the second-largest value of $\mu_y(\phi)$ for a $k$-elementary periodic point $y$.  Perturbations of $\phi$ by no more than half this gap remain within $\cO_k$.  Similarly, if $\phi\in\cO_k\,\dot\cup\,\cU_k$, then there is a gap between the approximation to $b_\phi$ and the largest $\mu_x(\phi)$ of a $k$-elementary periodic point $x$ which is not included in the superset of $\EPer_{\max}^k(\phi)$ constructed above.  Perturbations of $\phi$ by no more than half this gap remain within $\cO_k\,\dot\cup\,\cU_k$.  By using more accurate approximations, we can discover more potentials and refine the radii of the constructed balls, so that, in the limit, the constructed open sets  cover $\cO_k$ or $\cO_k\,\dot\cup\,\cU_k$.  
\end{proof}

We now discuss the computability of the entropy and the zero-temperature measure.  These propositions are the main computability statements of Theorem B.

\begin{proposition}\label{prop:computability}
The map $\phi\mapsto h_{\infty,\phi}$ is computable on $\cO_k\,\dot\cup\,\cU_k$.  Moreover, the map $\phi\mapsto \mu_{\infty,\phi}$ is computable on $\cO_k$.
\end{proposition}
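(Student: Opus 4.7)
The plan is to treat the two assertions separately. Both ultimately reduce to the termination of the identification procedures outlined just before Proposition~\ref{prop:recursivelyopen}, combined with the structural description of $\mu_{\infty,\phi}$ and $h_{\infty,\phi}$ on $\cO_k$ and $\cU_k$ recorded in Section~\ref{sec:2} (drawn from \cite{WY}).

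For the entropy on $\cO_k\,\dot\cup\,\cU_k$, I observe that parts (a) and (b) of the structural proposition in Section~\ref{sec:2} yield $h_{\infty,\phi}=0$ on this entire set. Consequently, a Turing machine that ignores its input and outputs the rational number $0$ for every requested precision $n$ is a computability witness: the bound $|0-h_{\infty,\phi}|=0<2^{-n}$ holds trivially. No actual approximation of or membership test on $\phi$ is needed, since the definition of computability of a function requires correctness only when the input $\phi$ genuinely lies in the declared domain.

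For the zero-temperature measure on $\cO_k$, I would proceed algorithmically. Given an oracle $\chi$ for $\phi\in\cO_k$, run the Turing machine $\psi_{\cO_k}$ until it terminates. By construction, termination occurs once the approximations of $b_\phi$ (computable by Lemma~\ref{lem:computableI}) and of $\mu_y(\phi)$ for $y\in\EPer^k(f)$ (via formula~\eqref{muxphi}) are sharp enough to force the candidate superset of $\EPer_{\max}^k(\phi)$ to collapse to a single $k$-elementary orbit, whose representative $x$ is then read off from the computation. Termination is guaranteed because $\EPer^k(f)$ is finite and, for $\phi\in\cO_k$, there is a strict gap between $b_\phi=\mu_x(\phi)$ and the maximum of $\mu_y(\phi)$ taken over those $y\in\EPer^k(f)$ that do not lie in the orbit of $x$.

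Once $x$ has been identified, its prime period $n$ and the iterates $f^0(x),\ldots,f^{n-1}(x)$ are explicitly known finite words in $\cA$, hence computable points of $X$. Part (a) of the structural proposition then gives $\mu_{\infty,\phi}=\mu_x=\frac{1}{n}\sum_{i=0}^{n-1}\delta_{f^i(x)}$, already a finite convex combination of Dirac measures at computable points. Outputting this explicit measure yields Wasserstein--Kantorovich distance exactly zero from $\mu_{\infty,\phi}$, which is certainly less than $2^{-n}$. The only genuine technical point is the termination of $\psi_{\cO_k}$ on inputs in $\cO_k$, which is essentially the content of the recursive-openness argument already given in Proposition~\ref{prop:recursivelyopen}; I do not foresee any substantial obstacle beyond this repackaging.
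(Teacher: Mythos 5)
Your proposal is correct and follows essentially the same strategy as the paper: for the entropy you observe that $h_{\infty,\phi}\equiv 0$ on $\cO_k\,\dot\cup\,\cU_k$ so the constant-zero machine is a witness, and for the measure you run the identification procedure from the discussion preceding Proposition \ref{prop:recursivelyopen} until the candidate superset of $\EPer_{\max}^k(\phi)$ collapses to a single $k$-elementary orbit $x$, then output the explicit periodic-point measure $\mu_x$. The paper's proof packages this by pointing to the ball constructed in the proof of Proposition \ref{prop:recursivelyopen}, on which the maximizing orbit is constant; you unpack the same termination/gap argument directly, which is a presentational rather than mathematical difference.
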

\begin{proof}
Suppose that $\phi\in\cO_k\,\dot\cup\,\cU_k$.  Since the entropy of all zero-temperature measures of potentials in $\cO_k\,\dot\cup\,\cU_k$ is zero, which is computable, the entropy function is computable.  Suppose now that we know that $\phi\in\cO_k$.  By inspecting the proof of Proposition \ref{prop:recursivelyopen}, we find that for all $\phi'$ in the ball produced in the proof, the same $k$-elementary orbit $x$ maximizes $\mu(\phi')$.  Therefore, for every $\phi'$ in the ball, the zero-temperature measure is $\mu_x=\mu_{\infty,\phi}$.  This measure is computable since the supporting $k$-elementary periodic point is computable.
\end{proof}

We now complete the proof of Theorem B by showing that the functions $\phi\mapsto \mu_{\infty,\phi}$ and $\phi\mapsto h_{\infty,\phi}$ are not continuous, and, hence, not computable on the complement of the points in Proposition \ref{prop:computability}.

\begin{proposition}
The map $\phi\mapsto \mu_{\infty,\phi}$ is not continuous at any $\phi_0\in\cU_k\,\dot\cup\,\cV_k$.  Moreover, the map $\phi\mapsto h_{\infty,\phi}$ is not continuous at any $\phi_0\in\cV_k$.  In particular, the corresponding maps are not computable at $\phi_0$.
\end{proposition}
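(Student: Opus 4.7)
I split the statement into two cases depending on whether $\phi_0\in\cU_k$ or $\phi_0\in\cV_k$. In each case I exhibit a sequence of potentials converging to $\phi_0$ in $LC_k(X,\bR)$ along which $\mu_{\infty,\phi_n}$ (and, in the $\cV_k$ case, $h_{\infty,\phi_n}$) fails to converge to the value at $\phi_0$. Non-computability at $\phi_0$ then follows immediately, since computability at a point entails continuity at that point.

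For $\phi_0\in\cU_k$, write $\EPer_{\max}^k(\phi_0)=\{x^1,\dots,x^\ell\}$ with $\ell\ge 2$; by the definition of $\cU_k$, the $k$-cylinder supports $\cS_k(x^i)$ are pairwise disjoint. For each $i\in\{1,\dots,\ell\}$ fix any cylinder $\cC\in\cS_k(x^i)$ and, for small $\epsilon>0$, set $\phi_\epsilon^{(i)}=\phi_0+\epsilon\,\1_{\cC}\in LC_k(X,\bR)$. Formula \eqref{muxphi} gives $\mu_{x^i}(\phi_\epsilon^{(i)})=b_{\phi_0}+\epsilon/n_i$ (where $n_i$ is the prime period of $x^i$), whereas $\mu_{x^j}(\phi_\epsilon^{(i)})=b_{\phi_0}$ for every $j\ne i$ by disjointness of supports. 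Since $\EPer^k(f)$ is finite, the gap
\[
\delta\;=\;b_{\phi_0}-\max\bigl\{\mu_y(\phi_0):y\in\EPer^k(f)\setminus\EPer_{\max}^k(\phi_0)\bigr\}
\]
is strictly positive, so for $\epsilon<\delta$ the value $\mu_{x^i}(\phi_\epsilon^{(i)})$ strictly exceeds $\mu_y(\phi_\epsilon^{(i)})$ for every other $k$-elementary orbit $y$. Hence $\phi_\epsilon^{(i)}\in\cO_k$ with unique maximizing orbit $x^i$, and the classification of $\cO_k$ yields $\mu_{\infty,\phi_\epsilon^{(i)}}=\mu_{x^i}$. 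Letting $\epsilon\downarrow 0$ with $i=1$ and then with $i=2$ produces two sequences converging to $\phi_0$ whose zero-temperature measures converge to the distinct periodic point measures $\mu_{x^1}\ne\mu_{x^2}$; thus $\phi\mapsto\mu_{\infty,\phi}$ cannot be continuous at $\phi_0$.

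For $\phi_0\in\cV_k$, the density of $\cO_k$ in $LC_k(X,\bR)\cap\overline{B}(0,1)$ stated in Section~\ref{sec:results} supplies a sequence $(\phi_n)_n\subset\cO_k$ with $\phi_n\to\phi_0$. For the entropy map, $h_{\infty,\phi_n}=0$ for every $n$, while the classification of $\cV_k$ gives $h_{\infty,\phi_0}>0$, so $\phi\mapsto h_{\infty,\phi}$ is discontinuous at $\phi_0$. For the measure map, each $\mu_{\infty,\phi_n}$ is a periodic point measure $\mu_{x_n}$ with $x_n\in\EPer^k(f)$; since $\EPer^k(f)$ is finite, I pass to a subsequence along which $x_n$ is constantly equal to some $x$, so $\mu_{\infty,\phi_n}=\mu_x$ converges trivially to $\mu_x$. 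But $h_{\mu_x}(f)=0<h_{\infty,\phi_0}=h_{\mu_{\infty,\phi_0}}(f)$, forcing $\mu_x\ne\mu_{\infty,\phi_0}$, and discontinuity of $\phi\mapsto\mu_{\infty,\phi}$ at $\phi_0$ follows.

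I do not expect any serious obstacle. The only slightly delicate bookkeeping is in the $\cU_k$ perturbation, where one must ensure that the bump $\epsilon\,\1_{\cC}$ both breaks the tie among the $x^i$'s and does not inadvertently promote some non-maximizing orbit above $x^i$; this is handled cleanly by the strict positivity of the gap $\delta$, which is available because $\EPer^k(f)$ is finite.
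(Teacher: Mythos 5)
Your proposal is correct and follows essentially the same route as the paper's proof: for $\phi_0\in\cU_k$ you perturb on a $k$-cylinder in the support of one maximizing orbit but not the others to produce two sequences in $\cO_k$ with distinct zero-temperature limits, and for $\phi_0\in\cV_k$ you use the density of $\cO_k$, finiteness of $\EPer^k(f)$, and the entropy gap to conclude discontinuity of both maps. Your treatment of the $\cU_k$ case is a bit more explicit than the paper's in that you name the gap $\delta$ and verify quantitatively that no non-maximizing orbit is promoted, but the idea is the same.
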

\begin{proof}
Suppose that $\phi_0\in\cV_k$, then we know that $h_{\infty,\phi_0}>0$.  However, since $\cO_k$ is dense in $LC_k(X,\bR)\cap\overline{B}(0,1)$, in any neighborhood of $\phi_0$, there is a potential in $\cO_k$ whose zero-temperature measure has entropy zero.  Therefore, the entropy map is not continuous, and, hence, not computable at $\phi_0$.

Suppose that $\phi_0\in\cV_k$.  In this case, since $\cO_k$ is dense in $LC_k(X,\bR)\cap\overline{B}(0,1)$, there is an infinite sequence of $\phi_n$'s in $\cO_k$ whose limit is $\phi$.  Since there are only finitely many $k$-elementary periodic points, by passing to a subsequence, we can assume that there is a $k$-elementary periodic point $x$ so that $\mu_x=\mu_{\infty,\phi_n}$ for all $n$.  If the zero-temperature measure map were continuous, then $\mu_{\infty,\phi_0}$ would be $\mu_x$, but this is not possible since the entropy of a periodic point measure is $0$.  Thus, the map $\phi\mapsto \mu_{\infty,\phi}$ is not continuous at $\phi_0\in\cV_k$.

Finally, to show that the map $\phi\mapsto \mu_{\infty,\phi}$ is not continuous at $\phi_0\in\cU_k$, we find two sequences of potentials converging to $\phi$ where the corresponding sequences of zero-temperature measures have different limits.  In particular, we construct two sequences of potentials $(\phi_{1,n})_n$ and $(\phi_{2,n})_n$ where, for all $n$, $\mu_{\infty,\phi_{1,n}}=\mu_x$ and $\mu_{\infty,\phi_{2,n}}=\mu_y$ with $x$ and $y$ distinct $k$-elementary periodic points.

Suppose, first, that $\phi_0\in\cU_k$.  Then, there are two $k$-elementary periodic points $x,y\in\EPer_{\max}^k(\phi)$ with disjoint $k$-cylinder support.  Therefore, there exists a $k$-cylinder $\cC(\tau)$ that is in the support of $x$, but not in the support of $y$.  Similarly, there is a $k$-cylinder $\cC(\tau')$ that is not in the support of $x$, but is in the support of $y$.  By (slightly) increasing $\phi_0|_{\cC(\tau)}$ or $\phi_0|_{\cC(\tau')}$, we can make $\EPer_{\max}^k(\phi)$ consist of a single $k$-elementary periodic point $x$ or $y$.  Therefore, by taking a sequence of small perturbations, we conclude that the function $\phi\mapsto \mu_{\infty,\phi}$ is not continuous at $\phi_0$.
\end{proof}

\section{Computability of Zero Temperature Measures for locally constant potentials: the case of unbounded cylinder length}\label{sec:5}

It is natural to ask whether Proposition \ref{prop:recursivelyopen} requires $k$ to be given or if the statements can be generalized to the sets $\cO=\bigcup_k \cO_k$ and $\cO\,\cup\,\cU$, where $\cU=\bigcup_k \cU_k$.  In this section, we give a negative answer to this question by showing the fact that the sets $\cO_k$ and $\cO_k\,\dot\cup\,\cU_k$ are recursively open does not extend to $\cO$ and $\cO\,\cup\,\cU$.  In particular, we prove  that $\cO$ has no interior points in $LC(X,\bR)$ (Theorem C).
We begin with an illustrative example that shows that $\cO$ is not open, in general, and provides the motivation for the proof of Theorem C.

\begin{example}\label{ex4.1}
Consider the SFT with alphabet $\{0,1,2,3\}$ and transition matrix given in Figure \ref{fig:states}.
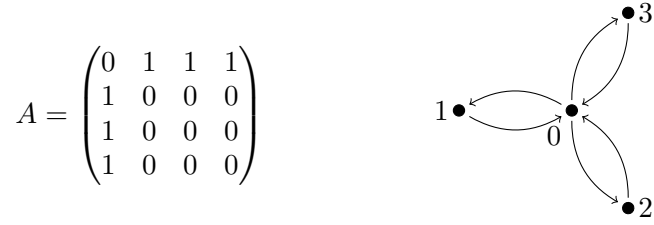
\begin{figure}[hbt]
\begin{tabular}{m{2in}m{2in}}
$
\displaystyle A=\begin{pmatrix}0&1&1&1\\1&0&0&0\\1&0&0&0\\1&0&0&0\end{pmatrix}
$
&
\begin{tikzpicture}[scale=.75]
\filldraw (0,0) circle (.1);
\node at (0,0) (A) {};
\filldraw (-2,0) circle (.1);
\node at (-2,0) (B) {};
\begin{scope}[rotate=120]
\filldraw (-2,0) circle (.1);
\node at (-2,0) (C) {};
\end{scope}
\begin{scope}[rotate=-120]
\filldraw (-2,0) circle (.1);
\node at (-2,0) (D) {};
\end{scope}
\draw (A) edge[->,in=30,out=150] (B);
\draw (A) edge[<-,in=-30,out=-150] (B);
\draw (A) edge[->,in=150,out=270] (C);
\draw (A) edge[<-,in=90,out=330] (C);
\draw (A) edge[->,in=210,out=90] (D);
\draw (A) edge[<-,in=270,out=30] (D);
\node[below left] at (0,-.1) {$0$};
\node[left] at (B) {$1$};
\node[right] at (C) {$2$};
\node[right] at (D) {$3$};
\end{tikzpicture}
\end{tabular}
\caption{The transition matrix and corresponding directed graph illustrating the allowable transitions between states.\label{fig:states}}
\end{figure}

Let $\phi\in LC_2(X,\bR)$ be the potential whose value on cylinders $\cC_2(01)$ and $\cC_2(10)$ is $2$, while its value on any other (nonempty) cylinder of length 2 is $1$.  In other words, $\phi$ is defined by the following matrix:
$$
\begin{pmatrix}
0&2&1&1\\
2&0&0&0\\
1&0&0&0\\
1&0&0&0
\end{pmatrix}
$$
For each $n\in\bN$, we define a potential $\phi_n\in LC_{2n+2}(X,\bR)$ which is a perturbation of $\phi$.  For a segment $\tau$ we denote by $\#_2(\tau)$ and $\#_3(\tau)$  the number of $2$'s or $3$'s appearing in $\tau$, respectively.  We define
$$
\phi_n(w)=\begin{cases}2+\frac{2}{n}&w\in \cC_2(01)\cup\cC_2(10),\,  \#_2(\pi_{2n+2}(w))+\#_3(\pi_{2n+2}(w))=1\\
\phi(w)&\text{otherwise}
\end{cases}.
$$
In other words, $\phi_n(w)=\phi(w)$ unless $\tau=\pi_{2n+2}(w)$ begins with $01$ or $10$ and contains either (exactly one $2$ and no $3$'s) or (exactly one $3$ and no $2$'s).  We see that $\|\phi-\phi_n\|_\infty=\frac{2}{n}$.  Moreover, $\EPer_{\max}^2(\phi)=\EPer_{\max}^{2n+2}(\phi)$ consists of the single $2$-elementary periodic orbit of $x=\cO(01)$.  We observe that both $\phi$ and $\phi_n$ are constant on the orbit of $x$, so $\mu_x(\phi)=2=\mu_x(\phi_n)$.

On the other hand, $\EPer_{\max}^{2n+2}(\phi_n)$ contains at least three $(2n+2)$-elementary periodic orbits: the orbits generated by $(01)^n02$, $(01)^n03$, and $(01)^n02(01)^n03$.  Here, $(01)^n$ represents the sequence of length $2n$ consisting of $01$ repeated $n$ times.  Let $z_1=\cO((01)^n02)$, $z_2=\cO((01)^n03)$, and $z_3=\cO((01)^n02(01)^n03)$.  We observe that $\mu_{z_i}(\phi_n)=2+\frac{1}{n+1}>2$ for $i=1,2,3$.  On the other hand, we observe that $\mu_{z_i}(\phi)=2-\frac{1}{n+1}$.  

Putting this together, we note that since $\EPer_{\max}^2(\phi)$ consists of a single periodic orbit, $\phi\in\cO_2$ with $h_{\infty,\phi}=0$ and $\mu_{\infty,\phi}=\mu_x$.  On the other hand, since $z_1$, $z_2$, and $z_3$ have overlapping cylinders, $\phi_n\in\cV_{2n+2}$ with $h_{\infty,\phi_n}>0$.  We, therefore, conclude that since $\phi_n\rightarrow\phi$, $\cO$ is not open in the supremum norm topology on $LC(X,\bR)$, so, in particular, $\cO$ is not a recursively open set.  We observe, however, that by Lemma \ref{lem2a}, $h_{\infty,\phi_n}\rightarrow 0$ as $n\to\infty$.
\end{example}
This example shows that, in general, $\cO$ is not open in $LC(X,\bR)$.  Moreover, we note that, in our example, the maximal $(2n+2)$-elementary periodic orbits of $\phi_n$ do not include the maximal $2$-elementary periodic orbits of $\phi$.  In other words, the set of maximizing elementary periodic orbits may change considerably under perturbations once the cylinder length is not fixed.  

Using this example as a guide, we show that the set $\cU\,\dot\cup\,\cV$ is dense in $LC(X,\bR)$, where $\cV=\bigcup_k\cV_k$.  This shows that the proof of Proposition \ref{prop:recursivelyopen} does not directly extend to the sets $\cO$ and $\cO\cup\cU$.  .

\begin{proposition}\label{prop:UV}
Let $f:X\to X$ be transitive SFT with positive topological entropy.  Then, the set $\cU\,\dot\cup\,\cV$ is dense in $LC(X,\bR)$ with respect to the supremum norm topology. 
\end{proposition}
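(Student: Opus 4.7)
The plan is to extend Example~\ref{ex4.1} to an arbitrary transitive SFT with positive topological entropy. If $\phi \in LC(X,\bR) \setminus \cO$, i.e., $\phi \in LC_k(X,\bR) \setminus \cO_k$ for some $k$, then $\phi$ already lies in $\cU_k \cup \cV_k \subseteq \cU \cup \cV$ and there is nothing to prove. Otherwise $\phi \in \cO_k$. Since the partition $\cO_k \,\dot\cup\, \cU_k \,\dot\cup\, \cV_k$ is invariant under the conjugacy of Proposition~\ref{propone} (and transitivity and positive entropy are conjugacy-invariant), we reduce to $k = 1$, so we take $\phi \in \cO_1 \cap LC_1(X,\bR)$ with unique maximizing elementary periodic orbit $x = \Or(\tau_1 \cdots \tau_m)$ and $b_\phi = \mu_x(\phi)$.

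The structural step is to exhibit two distinct ``detours'' that may be inserted into the orbit of $x$: admissible words $u_1 \neq u_2$ of a common length $p$ such that $\tau u_i \tau$ is admissible in $X$ for $i = 1, 2$. The existence of such a pair is a standard consequence of positive topological entropy and irreducibility of the transition matrix: the number of admissible cycles of length $q$ through any fixed vertex grows exponentially in $q$ (like the $q$-th power of the Perron--Frobenius eigenvalue), so for $q$ sufficiently large we obtain many distinct cycles through any chosen vertex, and we route them through a vertex on the orbit of $x$.

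Given the detours, we mimic Example~\ref{ex4.1}. Set $\ell_n = nm + p$ and $v_i = \tau^n u_i$ for each large $n$, and consider the three $\ell_n$-elementary periodic points
\[
z_1 = \Or(v_1), \qquad z_2 = \Or(v_2), \qquad z_3 = \Or(v_1 v_2).
\]
Define $\phi_n \in LC_{\ell_n}(X,\bR)$ by $\phi_n(w) = \phi(w) + 1/n$ on a collection $E_n$ of $\ell_n$-cylinders (designed analogously to Example~\ref{ex4.1}: cylinders whose initial segment is a cyclic shift of $v_1$ or $v_2$ beginning in a symbol of $\tau$), and $\phi_n(w) = \phi(w)$ otherwise. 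Then $\|\phi - \phi_n\|_\infty = 1/n \to 0$; the orbit of $x$ never enters $E_n$, so $\mu_x(\phi_n) = b_\phi$; and a direct count mirroring Example~\ref{ex4.1} yields $\mu_{z_1}(\phi_n) = \mu_{z_2}(\phi_n) = \mu_{z_3}(\phi_n) = b_\phi + \Theta(1/n)$, with this common value coinciding with $b_{\phi_n}$. Since $\cC_{\ell_n}(v_1) \in \cS_{\ell_n}(z_1) \cap \cS_{\ell_n}(z_3)$, the $\ell_n$-cylinder supports of the maximizing orbits are not pairwise disjoint, so $\phi_n \in \cV_{\ell_n} \subset \cV$. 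Sending $n \to \infty$ completes the proof.

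The main obstacle we expect is not the detour claim (which is a standard graph-theoretic consequence of positive entropy), but rather the verification that $\mu_{z_i}(\phi_n) = b_{\phi_n}$---that is, that the perturbation does not inadvertently elevate some other $\ell_n$-elementary orbit to a strictly larger $\phi_n$-average. This requires a careful combinatorial analysis of how much time various periodic orbits can spend in $E_n$, of precisely the type implicit in Example~\ref{ex4.1}.
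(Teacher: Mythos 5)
Your opening move---reduce to $\phi\in\cO$, take the unique maximizing $k$-elementary orbit $x$, and perturb $\phi$ on cylinders seen along a long periodic orbit that wraps many times around $x$ and then ``detours''---matches the paper's strategy; the extra reduction to $k=1$ via Proposition~\ref{propone} is harmless but unnecessary. The gaps are in the second half.

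First, the asserted equality $\mu_{z_1}(\phi_n)=\mu_{z_2}(\phi_n)=\mu_{z_3}(\phi_n)$ fails for a general $\phi$. In Example~\ref{ex4.1} the base potential is symmetric under swapping the symbols $2\leftrightarrow 3$, so the two detours contribute equal $\phi$-weight; a general $\phi$ assigns different averages to $u_1$ and $u_2$, and the discrepancy $\mu_{z_1}(\phi)-\mu_{z_2}(\phi)$ is of order $\Theta(1/n)$---exactly the order of the uniform $1/n$ bump you add on $E_n$. So the three orbits generically do \emph{not} tie under the perturbation you describe, and ``a direct count mirroring Example~\ref{ex4.1}'' does not carry over. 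Second, as you yourself flag, you would still need to rule out that some other $\ell_n$-elementary orbit strictly beats the $z_i$'s under $\phi_n$; this verification is genuinely delicate and is not supplied. Note also that you are aiming for $\cV$, which is strictly more than the proposition asks for (density of $\cU\,\dot\cup\,\cV$).

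The paper's proof sidesteps all of this with an intermediate-value argument. It uses one detour $y$ and one long periodic point $z_m=\Or(\tau_x^m y)$, perturbs $\phi$ by a single parameter $t\in[0,\epsilon]$ uniformly on $\cS_\ell(z_m)\setminus\cS_\ell(x)$ (so that $\mu_x(\phi_{t,\ell})=\mu_x(\phi)$ for all $t$), and shows that for $m$ large $\mu_{z_m}(\phi_{\epsilon,\ell})>\mu_x(\phi_{\epsilon,\ell})$. Hence $x$ is the unique $\ell$-maximizer at $t=0$ but no longer a maximizer at $t=\epsilon$. Setting $t_0=\sup\{t:x\in\EPer_{\rm max}^\ell(\phi_{t,\ell})\}$, openness of $\cO_\ell$ and continuity of $t\mapsto\mu_z(\phi_{t,\ell})$ over the finitely many $\ell$-elementary orbits force $0<t_0<\epsilon$ and force $\EPer_{\rm max}^\ell(\phi_{t_0,\ell})$ to contain $x$ together with at least one other orbit. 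So $\phi_{t_0,\ell}\notin\cO$ while $\|\phi-\phi_{t_0,\ell}\|_\infty<\epsilon$, giving $\phi_{t_0,\ell}\in\cU\,\dot\cup\,\cV$ without ever identifying the competing orbit, balancing several bumps, or distinguishing $\cU$ from $\cV$---which is exactly where your argument stalls. Replacing your direct verification with this one-parameter continuity step is the natural way to complete the proof.
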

\begin{proof}
Let $\phi\in\cO$. We show that for every neighborhood of $\phi$ there exists $\phi'\in\cU\,\dot\cup\,\cV$ in this neighborhood.  Since $\cO$ is dense, the density of $\cU\,\dot\cup\,\cV$ follows.  

Let $k\in \bN$ be such that $\phi\in\cO_k$, and let $x\in\EPer_{\max}^k(\phi)$ correspond to the unique maximal $k$-elementary periodic orbit for $\phi$ with period $\ell_x$ and generating segment $\tau_x$.  We now consider periodic points of the form $z_m=\cO(\tau_x^my)$, where $\tau_x^m$ denotes the $m$-times concatenation of $\tau_x$ and $y$ is a segment of length $\ell_y$.  By  transitivity and positive topological entropy of $f$ we may assume  that  $y_i\not=x_i$ for some
$i\in \{1,\dots ,\min\{\ell_x,\ell_y\}\}$.

In the following, we fix the segment $y$  and vary $m\geq 2$.
Let $\ell=\ell(m)$ be the smallest cylinder length so that $z_m$ is $\ell$-elementary periodic.  We observe that $(m-1)\ell_x<\ell$ since the $(m-1)\ell_x$-cylinders starting at the first two copies of $\tau_x$ in $z_m$ are identical.  On the other hand, $\ell\leq m\ell_x+\ell_y$ is a consequence of the construction of $z_m$.  We restrict our attention to cylinders of length $\ell$ throughout the remainder of this proof.  The fact that $x$ is a $k$-elementary periodic point with period $\ell_x$ implies that $|\cS_\ell(x)|=\ell_x$, where $\cS_\ell(x)$ denotes the $\ell$-cylinder support of $x$, see Equation \eqref{defkcyl}.  Moreover, since $z_m$ is $\ell$-elementary periodic with period $m\ell_x+\ell_y$, $|\cS_\ell(z_m)|=m\ell_x+\ell_y$.  We define potentials $\phi_{\epsilon,\ell}$ as follows:
$$
\phi_{\epsilon,\ell}(w)=\begin{cases}
\phi(w)+\epsilon&\cC_\ell(w)\in\cS_\ell(z_m)\setminus\cS_\ell(x)\\
\phi(w)&\text{otherwise}
\end{cases}.
$$
We observe that $\left\|\phi-\phi_{\epsilon,\ell}\right\|_\infty=\epsilon$.  Moreover, by construction, $\mu_x\left(\phi_{\epsilon,\ell}\right)=\mu_x(\phi)$.  On the other hand, since $|\cS_\ell(z_m)\setminus\cS_\ell(x)|\geq (m-1)\ell_x+\ell_y$, it follows that 
$$
\mu_{z_m}(\phi)+\frac{(m-1)\ell_x+\ell_y}{m\ell_x+\ell_y}\epsilon\leq\mu_{z_m}\left(\phi_{\epsilon,\ell}\right)\leq\mu_{z_m}(\phi)+\epsilon.
$$
Furthermore, since $z_m$ begins with $m$ copies of $\tau_x$, for $0\leq i<m\ell_x-k$, $\cC_k(f^i(z_m))\in\cS_k(x)$.  Let $m'=\left\lfloor m-\frac{\ell_y+k}{\ell_x}\right\rfloor$.  Then, $z_m=\cO(\tau_x^{m'}\tau_x^{m-m'}y)$ and
$$
\mu_{z_m}(\phi)=\frac{m'\ell_x}{m\ell_x+\ell_y}\mu_x(\phi)+\frac{(m-m')\ell_x+\ell_y}{m\ell_x+\ell_y}\mu_{\cO(\tau_x^{m-m'}y)}(\phi).
$$
For fixed $\epsilon>0$, we observe that as $m$ (and hence $m'$) increases, 
$$
\mu_{z_m}(\phi_{\epsilon,\ell})\rightarrow\mu_{z_m}(\phi)+\epsilon
\quad\text{and}\quad 
\mu_{z_m}(\phi)\rightarrow \mu_x(\phi).
$$
Therefore, for any fixed $\epsilon>0$, there exists an $m$ sufficiently large so that $\mu_{z_m}(\phi_{\epsilon,\ell})>\mu_{x}(\phi_{\epsilon,\ell})=\mu_x(\phi)$.  For the remainder of the proof, fix such $\epsilon$, $m$ and $\ell$.

Finally, we consider the family of potentials $\phi_{t,\ell}$, where $0\leq t\leq \epsilon$.  Let $t_0=\sup\left\{t:x\in\EPer_{\max}^\ell\left(\phi_{t,\ell}\right)\right\}$.  We observe that $t_0>0$ since $x\in\EPer_{\max}^\ell\left(\phi_{0,\ell}\right)$ and $\cO_\ell$ is open.  On the other hand, $t_0<\epsilon$ since $x\not\in\EPer_{\max}^\ell\left(\phi_{\epsilon,\ell}\right)$.  At $t_0$, $\EPer_{\max}^\ell\left(\phi_{t_0,\ell}\right)$ must contain at least two elementary periodic orbits, $x$ and some other orbit.  Therefore, $\phi_{t_0,\ell}\not\in\cO$ and $\left\|\phi-\phi_{t_0,\ell}\right\|_\infty=t<\epsilon$.  Therefore, $\phi_{t_0,\ell}\in\cU\,\dot\cup\,\cV$, and, by allowing $\epsilon$ decreasing to zero, the conclusion follows.
\end{proof}

\bibliographystyle{plain}
\bibliography{BW_References}

\end{document}